\newtheorem{Thm}{Theorem}{\bfseries}{\itshape}
\newtheorem{Cor}{Corollary}{\bfseries}{\itshape}
\newtheorem{Prop}[Cor]{Proposition}{\bfseries}{\itshape}
\newtheorem{Lem}[Cor]{Lemma}{\bfseries}{\itshape}
{\bfseries}{\itshape}
{\bfseries}{\rmfamily}
\newtheorem{Ex}[Cor]{Example}{\scshape}{\rmfamily}
\newtheorem{Rem}[Cor]{Remark}{\scshape}{\rmfamily}
\renewcommand\ge{\geqslant} \renewcommand\le{\leqslant}
\let\tildeaccent=\~ \let\hataccent=\^
\renewcommand\~[1]{\widetilde{#1}}
\def\<{\left<} \def\>{\right>} \def\({\left(} \def\){\right)}
\def\abs#1{\left\vert #1 \right\vert}  
\let\parasymbol=\S \def\secref#1{\parasymbol\ref{#1}}
\let\polishL=l \def\Zoladek.{\.Zol\c adek}
 \def\Mat{\operatorname{Mat}}
 \def\Im{\operatorname{Im}}
\def\GL{\operatorname{GL}} \def\etc.{\emph{etc}.}
\def\:{\colon} \def\R{{\mathbb R}} \def\C{{\mathbb C}} \def\Z{{\mathbb
    Z}} \def\N{{\mathbb N}}  
  \def\S{\varSigma}
\def\l{\lambda}   
 \def\d{\,\mathrm d}
 \let\PolishL=\L \def\Lojas.{\PolishL ojasiewicz}
\def\rest#1{{\vert_{#1}}}
 \def\spec{\operatorname{Spec}}
\def\Diff{\operatorname{Diff}}
\def\TDiff{\operatorname{TDiff}}
\def\id{\operatorname{id}}
\def\Der{{\mathcal{D}}}
\def\fm{{\mathfrak m}}
\begin{document}

\title{Finiteness properties of formal Lie group actions}

\author{Gal Binyamini}\address{University of Toronto, Toronto, 
Canada}\email{galbin@gmail.com}
\thanks{The author was supported by the Banting
  Postdoctoral Fellowship and the Rothschild Fellowship}

\begin{abstract}
  Following ideas of Arnold and Seigal-Yakovenko, we prove that
  the space of matrix coefficients of a formal Lie group action
  belongs to a Noetherian ring. Using this result we extend the
  uniform intersection multiplicity estimates of these authors
  from the abelian case to general Lie groups. We also demonstrate
  a simple new proof for a jet-determination result of Baouendi. et
  al.

  In the second part of the paper we use similar ideas to prove a
  result on embedding formal diffeomorphisms in one-parameter groups
  extending a result of Takens. In particular this implies that the
  results of Arnold and Seigal-Yakovenko are formal consequence of our
  result for Lie groups.
\end{abstract}
\maketitle
\date{\today}

\section{Introduction}
\label{sec:intro}

Let $M$ be a differentiable manifold and $F:M\to M$ a smooth
diffeomorphism. The fixed points of $F$ play an important role in
determining its dynamical properties. More generally, a point $p\in M$
is said to be \emph{periodic} of period $k\in\Z$ if $F^k(p)=p$. The
asymptotic properties of the set $k$-periodic points are one of the
principal dynamical invariants of $F$.

Recall that the index of an isolated fixed point $p$ of $F$ is defined
to be the number of fixed points born from $p$ after a generic small
perturbation of $F$ (counted with signs according to orientation). The
number of $k$-periodic points counted with the corresponding indices
is a more natural topological invariant. In particular, it is homotopy
invariant and can be computed at the level of homology, for instance
using the Lefschetz fixed point formula.

Suppose a map $F$ is given such that the number of $k$-periodic
points, counted with indices, tends to infinity. In \cite{ShubSull},
Shub and Sullivan show that in this case the number of periodic points
of the map must be infinite as well. More specifically, they prove the
following.
\begin{Thm}[\protect{\cite[Main Proposition]{ShubSull}}]
  Suppose that $F:(\R^n,0)\to(\R^n,0)$ is a $C^1$ map and $0$ is an
  isolated fixed point of $F^k$ for every $k\in\N$. Then the index of
  $0$ as a fixed point of $F^k$ is bounded as a function of $k$.
\end{Thm}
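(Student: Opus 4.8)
The fixed point index $\ind(F^k,0)$ is the Brouwer degree of the normalized map $x\mapsto (x-F^k(x))/\abs{x-F^k(x)}$ on a small sphere $\partial B_\e$ on which $0$ is the only zero of $\id-F^k$; by hypothesis such an $\e=\e(k)$ exists for every $k$. The plan is to bound this degree uniformly in $k$, and the governing idea is purely local and linear-algebraic: everything is controlled by those eigenvalues of $A:=DF(0)$ that are roots of unity. If $\l\in\spec A$ and $\l^k\neq1$ for all $k$, such a direction never produces a degeneracy, so the genuine difficulty is concentrated on the generalized eigenspaces attached to roots of unity.

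First I would dispose of the non-resonant directions. Write $\R^n=E^c\oplus E^h$ for the ($A$-invariant, $k$-independent) splitting into the generalized eigenspaces for $\abs\l=1$ and $\abs\l\neq1$. Since $I-A^k$ is uniformly invertible on $E^h$, a center-manifold reduction---valid in the $C^1$ (indeed Lipschitz) category, which is all an index computation requires---gives $\ind(F^k,0)=\operatorname{sign}\det(I-A^k|_{E^h})\cdot\ind(g^k,0)$, where $g=F|_{W^c}$ is the restriction to a locally invariant $C^1$ center manifold $W^c$ tangent to $E^c$. The sign factor is $\pm1$, hence harmless, and $Dg(0)$ has all eigenvalues on the unit circle. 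The eigenvalues of $Dg(0)$ that are not roots of unity again never resonate, so a further reduction---or the observation that the relevant degree is insensitive to such blocks, each contributing a factor $\abs{1-\l^k}^2>0$---reduces matters to the case where every eigenvalue of $A$ is a root of unity.

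Now set $N=\operatorname{lcm}$ of the orders of these eigenvalues and organize the iterates by residue class $k\equiv r\pmod N$. Because $\l^N=1$ for each relevant $\l$, the generalized eigenspace $V_r$ on which $A^{k}$ acts unipotently, its complement, and the associated sign all depend only on $r$, not on $q=(k-r)/N$. Reducing once more to a $C^1$ center manifold tangent to the ($A$-invariant) space $V_r$, I am left to bound $\ind\big((\phi^N)^q\circ\phi^r,0\big)$ as $q\to\infty$, where $\phi$ is the induced restriction of $F$ and both $\phi^N$ and $\phi^r$ have \emph{unipotent} linear part. Since there are finitely many residues $r$, it suffices to bound each such family.

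This unipotent case is the crux, and its heart is the identity-linear-part subcase. If $\psi(x)=x+h(x)$ with $Dh(0)=0$, then $\id-\psi^q=-\sum_{j=0}^{q-1}h\circ\psi^j$, whose leading geometry is that of $-q\,h=q(\id-\psi)$: multiplication by the positive factor $q$ does not change the Brouwer degree, so one expects $\ind(\psi^q,0)=\ind(\psi,0)$, constant in $q$. The rigorous content is a nonvanishing homotopy from $\id-\psi^q$ to a positive multiple of $\id-\psi$ on a sphere $\partial B_{\e}$ with $\e$ chosen small depending on $q$, which is legitimate precisely because the isolated-fixed-point hypothesis guarantees $\id-\psi^q$ never vanishes on the punctured ball. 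I expect this homotopy estimate to be the main obstacle: with only $C^1$ regularity there is no homogeneous leading jet of $h$ and no uniform lower bound for $\abs{h}$ relative to $\sup\abs{h}$ on small spheres, so one must control the accumulated error $\sum_{j}(h\circ\psi^j-h)$ against the main term $q\,h$ by exploiting $\sup_{B_\e}\norm{Dh}\to0$ and shrinking $\e$ with $q$. The general unipotent case $U=I+M$ with $M$ nilpotent, together with the post-composition by the fixed unipotent $\phi^r$, are then handled by the same degree-stability principle applied to the polynomial-in-$q$ structure of $\id-(\phi^N)^q\circ\phi^r$, giving a bound uniform in $q$ once the leading term is isolated.
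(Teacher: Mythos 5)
First, a point of orientation: the paper you were given never proves this theorem. It is quoted in the introduction as motivation, with the proof residing in \cite{ShubSull}; so your attempt has to be measured against Shub and Sullivan's original argument. Your skeleton is the right one --- concentrate on the eigenvalues of $A=DF(0)$ that are roots of unity, work modulo $N=\operatorname{lcm}$ of their orders, and reduce to the claim that in the unipotent case the index is constant along iterates --- but two of your steps are genuine gaps. The first is your second reduction, discarding unit-modulus eigenvalues that are not roots of unity. Inside $E^c$ every eigenvalue has modulus one, so there is no dominated splitting and hence no invariant manifold tangent to the root-of-unity generalized eigenspace; your parenthetical alternative, that such blocks contribute ``a factor $\abs{1-\l^k}^2>0$,'' is not a degree argument ($F^k$ is not a product map), and worse, the factor has no lower bound uniform in $k$: by Kronecker's theorem $\l^k$ returns arbitrarily close to $1$ whenever $\l$ lies on the unit circle and is not a root of unity. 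So that reduction does not go through as stated.

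The second, decisive, gap is that your crux --- $\ind(\psi^q,0)=\ind(\psi,0)$ for $\psi=\id+h$, $Dh(0)=0$ --- is exactly the step you leave open; conceding that you cannot control $\sum_j(h\circ\psi^j-h)$ against $qh$ is conceding the theorem, since everything else is bookkeeping. The missing idea (Shub--Sullivan's key lemma) is twofold. First, one homotopes $\id-f^m=\sum_{j=0}^{m-1}(\id-f)\circ f^j$ not to $m(\id-f)$ but to $L\circ(\id-f)$, where $L=I+A+\cdots+A^{m-1}$; $L$ is invertible precisely when no eigenvalue $\l\neq1$ of $A$ satisfies $\l^m=1$, and invertibility supplies the lower bound $\abs{L(\id-f)(x)}\ge c\,\abs{x-f(x)}$ against which errors are measured. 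Second --- and this answers your worry about $C^1$ regularity --- the error is estimated self-referentially: writing $g=\id-f$ and $v_j=g(f^j(x))$, one has $f^j(x)-x=-\sum_{i<j}v_i$, so a finite recursion (using only that $Dg$ is uniformly close to $I-A$ on a small ball) gives $\abs{v_j}=O(\abs{g(x)})$ and then $\abs{(\id-f^m)(x)-Lg(x)}=o(\abs{g(x)})$ as the radius shrinks. The error is thus measured against $\abs{x-f(x)}$ itself, not against $\abs{x}$, which is precisely the bound your $\sup\norm{Dh}$ bookkeeping cannot produce. Moreover, this one lemma makes your two geometric reductions unnecessary: applying it with $f=F^d$, $d=\gcd(k,N)$, $m=k/d$ (the eigenvalue condition holds by the definition of $N$, and it absorbs the hyperbolic and irrational-rotation directions simultaneously) yields $\ind(F^k,0)=\pm\ind(F^d,0)$, whence the bound $\max_{d\mid N}\abs{\ind(F^d,0)}$ with no center manifolds and no splitting at all.
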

Shub and Sullivan then apply this result to give a homological criterion
for the infinitude of the set of periodic points based on the Lefschetz
fixed point formula.

The index of an isolated fixed point may be viewed as a topological
intersection index. Indeed, if $\Delta\subset M\times M$ denotes the
diagonal manifold then the index of a fixed point $p$ is equal to the
intersection index of $\Delta$ and $(\id\times F)(\Delta)$ at $p$. It
is natural to ask whether an analog of Shub and Sullivan's main
proposition can be given for general intersection indices.

An affirmative answer in the holomorphic setting was provided by
Arnold in~\cite{Arnold}. Consider now $F:(\C^n,0)\to(\C^n,0)$ to be
a germ of a biholomorphism and let $V,W\subset(\C^n,0)$ be the germs
of two analytic submanifolds of complementary dimensions. Then $F^k(V)$
is again a germ of an analytic submanifold, and assuming that it has
an isolated intersection with $W$ at the origin, a local intersection
multiplicity $(F^k(V),W)$ is defined.

\begin{Thm}[\protect{\cite[Theorem 1]{Arnold}}]
  Suppose that every element of the sequence $\mu_k:=(F^k(V),W)$ for
  $k\in\N$ is finite. Then the sequence is uniformly bounded.
\end{Thm}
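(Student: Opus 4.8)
The plan, following Arnold and Seigal--Yakovenko, is to present each multiplicity $\mu_k=(F^k(\V),W)$ as the colength of an ideal whose generators vary, as $k$ ranges over $\N$, inside a \emph{finite-dimensional algebraic family}, and then to bound that colength by combining upper-semicontinuity with Noetherian induction. First I would fix a parametrization $\gamma\colon(\C^d,0)\to(\C^n,0)$ of the germ $\V$ and defining functions $g_1,\dots,g_d$ of the germ $W$, where $d=\dim\V=\codim W$. Writing $h^{(k)}_j=g_j\circ F^k\circ\gamma\in\cO_d$ for the local ring $\cO_d$ of germs at $0\in\C^d$, the hypothesis that the intersection of $F^k(\V)$ with $W$ is isolated says exactly that $h^{(k)}=(h^{(k)}_1,\dots,h^{(k)}_d)$ is a finite map germ, and
$$
\mu_k=\dim_\C \cO_d/(h^{(k)}_1,\dots,h^{(k)}_d).
$$

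The crux is to control how the $h^{(k)}_j$ depend on $k$. Let $A=dF_0$ have eigenvalues $\l_1,\dots,\l_n$. Expanding $F^k$ in a Taylor series and iterating the composition recurrence, each Taylor coefficient of a fixed degree of the components of $F^k$ is a \emph{quasi-polynomial} in $k$: a finite sum $\sum p(k)\,\l_1^{a_1k}\cdots\l_n^{a_nk}$ with $p\in\C[k]$ and $a_i\in\N$ (the polynomial factors coming from resonances and Jordan blocks). Consequently every Taylor coefficient of every $h^{(k)}_j$ lies in the finitely generated, hence Noetherian, $\C$-algebra $R=\C[k,\l_1^{\pm k},\dots,\l_n^{\pm k}]$. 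This is precisely the assertion that the matrix coefficients of the $\Z$-action generated by $F$ belong to a Noetherian ring, and it is the statement I would establish first (and which the Lie-group version abstracts).

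With this in hand I would assemble the whole sequence into a single object. Let $X=\spec R$ and regard the substituted generators $h_j$ as elements of $R[[x_1,\dots,x_d]]$, so that specializing at the closed point $x_k=(k,\l_1^k,\dots,\l_n^k)$ returns $h^{(k)}_j$. Since $R$ is Noetherian the ideal $(h_1,\dots,h_d)$ is finitely presented, so the fiberwise colength defines a function $\mu\colon X\to\Z_{\ge0}\cup\{\infty\}$ that is upper-semicontinuous in the Zariski topology, with value $\infty$ exactly on the closed locus $Z_\infty$ where the fiber intersection fails to be isolated. By hypothesis each $\mu_k<\infty$, so every point $x_k$ lies in the open subscheme $U=X\setminus Z_\infty$, which is again Noetherian of finite dimension. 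An upper-semicontinuous $\Z_{\ge0}$-valued function on such a space is bounded---by Noetherian induction it is generically constant on each irreducible component and jumps up only along proper closed subsets---so $\mu\le B$ on $U$ for some $B$. Hence $\mu_k=\mu(x_k)\le B$ for all $k\in\N$, the desired uniform bound.

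The main obstacle is the passage from the discrete sequence indexed by $k\in\N$ to the algebraic variety $X$. Two points require care: first, that the quasi-polynomial coefficients really generate a \emph{Noetherian} ring even when the $\l_i$ are multiplicatively dependent (so that $X$ need not be a torus and may be singular or reducible); and second, that the colength is genuinely upper-semicontinuous under specialization from the generic points of $X$ down to the distinguished points $x_k$, which rests on the finite presentation of the family over $R$ together with a standard flatness-type argument. The first point is exactly the Noetherianity theorem at the heart of the paper; the second is routine once finiteness of the fibers is known.
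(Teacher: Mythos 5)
Your argument is correct, but it reaches the statement by a different route than this paper does. What you propose is essentially the original Arnold/Seigal--Yakovenko proof specialized to the cyclic group generated by $F$: the Taylor coefficients of $F^k$ are quasi-polynomials in $k$ (this is Lemma~\ref{lem:power-coefficients} of the paper), so the whole family of ideals lives over a finitely generated, hence Noetherian, algebra $R$, and one finishes with upper semicontinuity of the colength plus stabilization of a decreasing chain of Zariski-closed subsets of $\spec R$. The paper cannot run this argument directly through its general machinery, since $\Z$ has infinitely many connected components and Theorem~\ref{thm:group-intersection} requires finitely many; instead it embeds the cyclic group into a bigger group: Proposition~\ref{prop:diff-embed} (via Lemma~\ref{lem:Gp-Gq}, Zariski closures of the jet groups generated by $j^qF$) produces an abelian linear algebraic group $G$ and a formal action $\rho$ with $F\in\Im\rho$, and then Theorem~\ref{thm:group-intersection} --- resting on Theorem~\ref{thm:main}, i.e.\ Iwasawa decomposition, Bochner linearization and Lemma~\ref{lem:exponent-coefficients} --- bounds $\mu_g$ uniformly over all $g\in G$, hence over all iterates $F^k$. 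Your route is more elementary and self-contained for this single statement (no structure theory of Lie groups, no algebraic-group embedding), but it does not extend past finitely generated abelian groups, while the paper's detour covers arbitrary Lie groups with finitely many components and yields its further corollaries (jet determination, embedding in formal flows, Takens' theorem); the shared core of both arguments is Lemma~\ref{lem:mult-cond} together with the quasi-polynomial description of matrix coefficients. Two small corrections to your justifications: (i) semicontinuity here is not a flatness statement --- $R[[x_1,\dots,x_d]]/(h_1,\dots,h_d)$ is not a coherent $R$-module, so coherence or generic-flatness arguments do not apply; the correct elementary mechanism is exactly Lemma~\ref{lem:mult-cond}, which detects colength $>m$ by the vanishing of finitely many minors built from the $m$-jets of the generators, making $\{\mu>m\}$ closed so that the Noetherian chain argument goes through; (ii) Noetherianity of $R$ is immediate from Hilbert's basis theorem once finite generation is known, so the substantive input in the cyclic case is the quasi-polynomial structure of the coefficients rather than Noetherianity itself --- the content of Theorem~\ref{thm:main} is precisely that an analogous finite generation persists for general Lie group actions.
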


Arnold's proof relied on the rather nontrivial Skolem-Mahler-Lech
theorem from additive number theory. In~\cite{SeigalYakov}, Seigal and
Yakovenko showed that Arnold's result could be proved using simpler
and much more general Noetherianity arguments. Using this approach,
they were able to generalize the result to the case of groups of
formal diffeomorphisms generated by finitely many commuting
diffeomorphisms and vector fields (see~\secref{sec:notation} for the
definition of the formal diffeomorphism group $\Diff[[\C^n,0]]$ and
its action on formal schemes).

More specifically, a commutative group $G\subset\Diff[[\C^n,0]]$ is
said to be finitely generated if there exist finitely many formal
diffeomorphisms $F_1,\ldots,F_p$ and finitely many formal vector
fields $V_1,\ldots,V_q$ such that all of these generators commute in
the appropriate sense, and every element of $G$ can be written as a
finite product of maps $F_i^{\pm1}$ and $e^{tV_j}$ where $t\in\C$.
Alternatively, $G$ is a product of a (discrete) finitely generated
abelian subgroup of $\Diff[[\C^n,0]]$ and a connected
(finite-dimensional) abelian Lie subgroup of $\Diff[[\C^n,0]]$ such
that the two subgroups commute.

\begin{Thm}[\protect{\cite[Theorem 1]{SeigalYakov}}]
  Let $G\subset\Diff[[\C^n,0]]$ be a finitely generated commutative
  group. Let $V,W$ be two germs of formal schemes, and define
  \begin{equation}
    \mu_g : G\to\N, \qquad \mu_g = (g^*V,W).
  \end{equation}
  Then there exists a constant $N\in\N$ depending on $V,W$ and $G$
  such that
  \begin{equation}
    \forall g\in G: \quad \mu_g<\infty \implies \mu_g<N.
  \end{equation}  
\end{Thm}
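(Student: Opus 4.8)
The plan is to run the Noetherianity strategy of Seigal--Yakovenko, now fed by the general matrix-coefficient result announced in the abstract. Write $\cO=\C[[x_1,\dots,x_n]]$ and let $I_V,I_W\subset\cO$ be the ideals cutting out $V,W$. For $g\in G$ the scheme $g^*V$ is cut out by the ideal $g^*I_V$ obtained by applying the automorphism $g^*$, and $\mu_g=\dim_\C\cO/(g^*I_V+I_W)$, which is finite precisely when $g^*I_V+I_W$ is $\fm$-primary. The whole argument will be dimension-agnostic: I only use this length description, so no complementary-dimension hypothesis is needed.

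First I would reduce the a priori transcendental condition ``$\mu_g\ge m$'' to a finite-jet condition. The elementary fact is that for any ideal $I$ one has $\dim_\C\cO/I\ge m$ if and only if $\dim_\C\cO/(I+\fm^m)\ge m$: one direction is immediate since $\cO/(I+\fm^m)$ is a quotient of $\cO/I$, and the other follows by comparing the $\fm$-adic filtrations, as each nonzero graded piece of $\cO/I$ below its nilpotency index contributes at least one to the length. Applied with $I=g^*I_V+I_W$, the condition $\mu_g\ge m$ becomes a condition on the image of $g^*I_V+I_W$ in the finite-dimensional jet space $\cO/\fm^m$: namely, that the subspace spanned by the jets $\{x^\gamma\cdot g^*v_j\}\cup\{x^\gamma\cdot w_i\}$, over fixed generators $v_j$ of $I_V$, $w_i$ of $I_W$, and $|\gamma|<m$, has codimension at least $m$. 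This is a rank condition, i.e.\ the simultaneous vanishing of the maximal minors of an explicit matrix $M_m(g)$.

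The crucial observation is that the entries of $M_m(g)$ lie in the Noetherian ring $R$ of matrix coefficients. Expanding $g^*x^\alpha=\sum_\beta c_{\alpha\beta}(g)\,x^\beta\pmod{\fm^m}$, each jet $x^\gamma g^*v_j$ is a fixed $\C$-linear combination of the matrix coefficients $c_{\alpha\beta}(g)$, so the minors of $M_m(g)$ are polynomials in the $c_{\alpha\beta}$, hence elements $f_{m,1},\dots,f_{m,r_m}$ of $R$. Setting $\mathfrak a_m=(f_{m,1},\dots,f_{m,r_m})$ and $\mathfrak b_m=\mathfrak a_1+\cdots+\mathfrak a_m$, and writing $Z_m=\{g:\mu_g\ge m\}$, we have $Z_m=Z(\mathfrak a_m)$; since the $Z_m$ are nested decreasing, $Z(\mathfrak b_m)=Z_1\cap\cdots\cap Z_m=Z_m$.

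Finally I would invoke Noetherianity. The $\mathfrak b_m$ form an ascending chain of ideals in $R$, so $\mathfrak b_N=\mathfrak b_{N+1}=\cdots$ for some $N$, whence $Z_N=Z_m$ for all $m\ge N$. As $\bigcap_m Z_m=\{g:\mu_g=\infty\}$, this forces $\{\mu_g\ge N\}=\{\mu_g=\infty\}$, equivalently $\mu_g<\infty\implies\mu_g<N$, which is the claim. The main obstacle is not this formal deduction but the structural input it rests on: that the matrix coefficients $c_{\alpha\beta}$ of the $G$-action genuinely generate a single Noetherian ring $R$ (the content of the first part of the paper), so that one constant $N$ handles the entire finitely generated group $G$ with its discrete and continuous generators simultaneously. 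A secondary point requiring care is the scheme-theoretic identification $\mu_g=\dim_\C\cO/(g^*I_V+I_W)$ and the precise jet level at which multiplicity $\ge m$ is detected, but these are routine once the above framework is in place.
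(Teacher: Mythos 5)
Your formal skeleton---the jet reduction of the multiplicity condition, the observation that the relevant minors lie in a ring of matrix coefficients, and the stabilization of an ascending chain of ideals---is exactly the Noetherianity argument the paper runs in Lemma~\ref{lem:mult-cond} and in the proof of Theorem~\ref{thm:group-intersection}, and that part of your write-up is sound. The gap is in the structural input you feed it. You take as given that the matrix coefficients of the $G$-action lie in a single Noetherian ring $R$, citing ``the content of the first part of the paper,'' i.e.\ Theorem~\ref{thm:main}. But Theorem~\ref{thm:main} applies only to Lie groups with \emph{finitely many connected components}, whereas the group in this statement is (the image of) $\Z^p\times\R^q$, which has infinitely many components as soon as there is a discrete generator of infinite order. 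This is not a technicality: the matrix coefficients of the cyclic group generated by a single diffeomorphism $F$ involve the functions $\l_i^t$ and $t$ on $\Z$ (Lemma~\ref{lem:power-coefficients}), and none of the machinery behind Theorem~\ref{thm:main} (the Iwasawa decomposition, Bochner linearization of the maximal compact) applies to such a group. So as written, your proof assumes precisely the point that needs work.

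The paper closes this gap with its embedding theorem, which is its main new contribution relevant to this statement: Theorem~\ref{thm:comm-gp-embedding} produces an abelian \emph{linear algebraic} group $G^\sharp$ (hence one with finitely many components) and a homomorphism $\rho^\sharp:G^\sharp\to\Diff[[\C^n,0]]$ with $\Im\rho\subset\Im\rho^\sharp$; this rests on Lemma~\ref{lem:Gp-Gq}, which shows that the Zariski closures of the groups of jets $j^q F$ stabilize under the projections $\pi_{q,p}$ once $q$ exceeds the first non-semisimple jet level, so that one can pass to the inverse limit. With this in hand, Theorem~\ref{thm:group-intersection} applied to $\rho^\sharp$ gives a bound $N$ valid on all of $\Im\rho^\sharp\supset G$, since $\mu_g$ depends only on the diffeomorphism $g$ and not on the group in which it sits. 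Alternatively, you could repair your argument without any embedding, following Seigal--Yakovenko's original route: simultaneously triangularize the commuting generators, use Lemma~\ref{lem:exponent-coefficients} and Lemma~\ref{lem:power-coefficients} to place the matrix coefficients of each one-parameter or cyclic factor in a finitely generated $\C$-algebra, and then use the direct-product structure of $\Z^p\times\R^q$ to place all matrix coefficients of $\rho$ in the tensor product of these algebras, which is finitely generated and hence Noetherian. Either way, the Noetherian ring must actually be constructed for this group; it cannot be quoted from Theorem~\ref{thm:main}.
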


Seigal and Yakovenko prove this theorem essentially by showing that
the set $G_j=\{g:\mu_g>j\}$ is given by the zero locus of an ideal
$I_j\subset R$, where $R$ is a certain Noetherian algebra of
continuous functions on $G$. The theorem (and its various
generalizations given in~\cite{SeigalYakov}) follow by a simple
Noetherianity argument.

In this paper we generalize the results of~\cite{SeigalYakov} from the
commutative case to the action of an arbitrary Lie group with finitely
many connected components. Moreover, we prove a formal embedding
result showing that arbitrary formal diffeomorphisms may be embedded
into the action of such groups, thus showing that the results
of~\cite{Arnold} and~\cite{SeigalYakov} follow formally from our
result. A synopsis of our results and some applications and
corollaries is presented in~\secref{sec:statement}. Proofs of the key
results and some further analysis is given in~\secref{sec:proofs}.

\subsection{Acknowledgements}

I would like to express my gratitude to Sergei Yakovenko for
introducing me to this problem. I would also like to thank Askold
Khovanskii and Pierre Milman for helpful discussions. Finally I wish
to thank the anonymous referees for numerous helpful comments, and
particularly for a suggested improvement to the presentation of the
proof of Theorem~\ref{thm:comm-gp-embedding}.

\section{Statement of our results}
\label{sec:statement}

\subsection{Notation}
\label{sec:notation}

The Lie groups considered in this paper are real Lie groups of finite
dimension, possibly admitting infinitely many connected components
(unless otherwise stated). Algebraic groups are always considered over
$\C$.

Let $\C[[x]]$ denote the ring of formal power series in $n$ variables.
Denote by $\fm\subset\C[[x]]$ the maximal ideal and by
$\C_p[[x]]:=\C[[x]]/\fm^{p+1}$ the ring of $p$-jets of formal power
series.

Let $\Diff[[\C^n,0]]$ denote the group of formal diffeomorphisms, i.e.
$\C$-algebra automorphisms of $\C[[x]]$ which map $\fm$ to itself. Let
$\Der[[\C^n,0]]$ denote the Lie algebra of formal singular vector
fields, i.e. $\C$-linear derivations of $\C[[x]]$ which map $\fm$ to
itself. Since $\Diff[[\C^n,0]]$ (resp. $\Der[[\C^n,0]]$) maps $\fm$ to
itself, there is an induced group of $p$-jets $\Diff_p[[\C^n,0]]$
(resp. Lie algebra $\Der_p[[\C^n,0]]$), and $\Diff[[\C^n,0]]$ (resp.
$\Der[[\C^n,0]]$) is the inverse limit of these groups (resp. Lie
algebras). When we talk about morphisms of Lie groups, algebraic
groups and Lie algebras into $\Diff[[\C^n,0]]$ or $\Der[[\C^n,0]]$ we
mean that the induced maps into the $p$-jet spaces are morphisms of
the prescribed type, for any $p\in\N$.

Recall that the formal exponentiation operator defines a map
\begin{equation}
  \exp:\Der[[\C^n,0]]\to\Diff[[\C^n,0]], \quad \exp(V)=e^V=I+V+\frac{V^2}{2}+\ldots
\end{equation}
For any $V\in\Der[[\C^n,0]]$ there is an associated one-parameter
group of formal diffeomorphisms $\tau_V:\R\to\Diff[[\C^n,0]]$ given by
$\tau_V(t)=e^{tV}$. Conversely, any analytic (or even continuous) one
parameter group $\tau:\R\to\Diff[[\C^n,0]]$ admits an infinitesimal
generator $V\in\Der[[\C^n,0]]$, i.e. $\tau=\tau_V$ (see \cite[Equation
3.6]{iy:book}).

For $\alpha\in\N^n$, let $x^\alpha\in\C[x]$ denote the corresponding
monomial in the standard multiindex notation. Let $(\cdot,\cdot)$
denote the standard inner product with respect to the monomial basis,
i.e. $(x^\alpha,x^\beta)=\delta_{\alpha,\beta}$.

\subsection{Noetherianity of Lie group actions}

Let $G$ denote a Lie group, and $\rho:G\to\Diff[[\C^n,0]]$ a
homomorphism of Lie groups. We say that $G$ acts formally on
$(\C^n,0)$. For $p\in\N$ we denote by $\rho_p:G\to\Diff_p[[\C^n,0]]$
the induced map.

\begin{Ex}
  Let $G=\R$ and $V\in\Der[[\C^n,0]]$. Then map
  $\tau_V:\R\to\Diff[[\C^n,0]]$ defined in~\secref{sec:notation}
  determines a formal action of $\R$ on $(\C^n,0)$. If $V$ is
  holomorphic in some disc containing the origin, then for $t\in\R$
  the action of $t$ is given by $e^{tV}$, the holomorphic time-$t$
  flow of $V$ in the usual analytic sense.
\end{Ex}

For any pair $\alpha,\beta\in\N^n$ we define the \emph{matrix
  coefficient}
\begin{equation}
  \rho_{\alpha,\beta}:G\to\C, \quad \rho_{\alpha,\beta}(g) = (\rho(g)(x^\alpha),x^\beta).
\end{equation}
We define \emph{the space of matrix coefficients of $\rho$} to be the
$\C$-vector space spanned by the set of all matrix coefficients of
$\rho$. This space agrees with the union of the corresponding spaces
of matrix coefficients of $\rho_p$ for $p\in\N$. Since the space of
matrix coefficients of a finite-dimensional group representation is
independent of the choice of basis elements, it follows that the space
of matrix coefficients of $\rho$ is invariant under a formal change of
coordinates (since it induces a linear isomorphism at the level of
jets).

We are now ready to state our main result. Denote by $C(G)$ the
$\C$-algebra of continuous functions on $G$.

\begin{Thm}\label{thm:main}
  Let $G$ be a Lie group with finitely many connected components and
  $\rho:G\to\Diff[[\C^n,0]]$ a Lie group homomorphism. Then the space
  of matrix coefficients of $\rho$ is contained in a Noetherian
  $\C$-algebra $R_\rho\subset C(G)$. Moreover, if $K$ is a maximal
  compact subgroup of $G$ then
  \begin{equation}
    \dim R_\rho\le \dim G+n(\dim G-\dim K)
  \end{equation}
  where $\dim R_\rho$ denotes the Krull dimension of $R_\rho$.
\end{Thm}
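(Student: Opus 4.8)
\emph{The plan is to reduce the statement, in three stages, to a single finite-dimensional linear-algebraic computation, and then to read off the dimension bound from the structure of a maximal compact subgroup.} First I would replace $G$ by its identity component $G^0$. Since $G$ has finitely many components, $G=\bigsqcup_i g_iG^0$, and on each coset one has $\rho_{\alpha,\beta}(g_ih)=(\rho(h)(x^\alpha),\rho(g_i)^{*}x^\beta)$, where $\rho(g_i)^{*}$ is the adjoint with respect to the monomial inner product; writing $\rho(g_i)^{*}x^\beta$ in the monomial basis exhibits this as a fixed $\C$-linear combination of matrix coefficients of $\rho\rest{G^0}$. Hence the space of matrix coefficients of $\rho$ is a finite extension of that of $\rho\rest{G^0}$, and neither Noetherianity nor Krull dimension is affected (while $\dim G=\dim G^0$ and $\dim K=\dim K^0$). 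So I assume $G$ connected. Next, because each $\rho(g)$ is a $\C$-algebra automorphism, $\rho(g)(x^\alpha)=\prod_i\rho(g)(x_i)^{\alpha_i}$, so every matrix coefficient is a polynomial in the coordinate-image coefficients $c_{i,\gamma}(g):=(\rho(g)(x_i),x^\gamma)$. It therefore suffices to produce a finitely generated $R_\rho\subset C(G)$ containing all the $c_{i,\gamma}$ and to bound $\dim R_\rho$.

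The second stage is the weight analysis. I would consider the linear action $\rho_1:G\to\GL_n(\C)$ on $\fm/\fm^2\cong\C^n$ and the Zariski closure $H=\overline{\rho_1(G)}\subset\GL_n(\C)$, a complex linear algebraic group with $\dim_\C H\le\dim G$ (the Zariski closure of the image of a real Lie group has complex dimension at most the real dimension of the source). Fixing a maximal torus $T\subset H$, the module $\C^n$ carries at most $n$ distinct $T$-weights $\omega_1,\dots,\omega_m$; since the graded pieces of $\C_p[[x]]$ are subquotients of tensor powers of $\C^n$, every weight occurring at any jet level is a sum of at most $p$ of the $\omega_j$, hence lies in the finitely generated monoid they span. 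Thus the multiplicative content of all the $c_{i,\gamma}$, uniformly in $p$, is built from these finitely many fundamental weights, even though the accompanying polynomial (unipotent) factors may grow in degree with $p$.

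In the third stage I would split each $c_{i,\gamma}$, using a maximal compact $K\subset G$ and the diffeomorphism $G\cong K\times\R^{\,d}$ with $d=\dim G-\dim K$, into a bounded part and an exponential part. The bounded part is governed by the reductive algebraic group $\overline{\rho(K)}$ together with finitely many unipotent coordinate functions carrying the polynomial-in-parameter (resonant) growth; this bounded-plus-unipotent algebra $B$ is finitely generated with $\dim B\le\dim G$. The exponential part is generated by the $n$ eigenvalue-exponentials $e^{t\lambda_i(X)}$ of $\rho_1(\exp tX)$ taken along a basis of the $d$ noncompact directions $X$, that is, by at most $nd=n(\dim G-\dim K)$ functions, spanning a finitely generated algebra $E$ with $\dim E\le n(\dim G-\dim K)$. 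Letting $R_\rho$ be the algebra generated by $B$ and $E$, it is finitely generated, hence Noetherian, contains every $c_{i,\gamma}$ by the weight analysis above, and satisfies $\dim R_\rho\le\dim B+\dim E\le\dim G+n(\dim G-\dim K)$.

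The main obstacle is making the bounded/exponential splitting globally single-valued and uniform across all jet levels: the eigenvalues of $\rho_1(g)$ are only locally defined because of monodromy, the torus $T$ need not be split, and the filtration on $\C_p[[x]]$ need not split $G$-equivariantly, so the unipotent coordinates must be chosen once and shown to carry the resonant growth at every level simultaneously. I would handle this through the algebraic hull of $G$ in the sense of Hochschild--Mostow, which packages the reductive, unipotent, and torus directions into a single pro-algebraic group whose coordinate ring is a filtered union of finitely generated pieces; the monoid bound of the second stage is precisely what forces this union to be finitely generated and to have the asserted Krull dimension.
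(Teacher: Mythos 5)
Your proposal has the same skeleton as the paper's proof: reduction to connected $G$, a decomposition of $G$ into a maximal compact factor times $d=\dim G-\dim K$ one-parameter directions (the Iwasawa--Malcev decomposition, Theorem~\ref{thm:iwasawa}), algebraic control of the compact factor, exponential--polynomial functions for the noncompact directions, and the count $\dim G+nd$. Your stage-two observation---that the weights appearing at \emph{every} jet level lie in the finitely generated monoid spanned by the weights of the linear part---is exactly the heart of the matter (Lemma~\ref{lem:exponent-coefficients}). But there is a genuine gap at precisely the point you flag yourself. You set up the eigenvalue analysis \emph{globally} on $G$ (eigenvalue functions of $\rho_1(g)$, a maximal torus of $\overline{\rho_1(G)}$, unipotent coordinates chosen once for all jet levels), list the resulting obstructions (monodromy, non-split torus, non-equivariant filtration), and then ``handle'' them by appealing to the Hochschild--Mostow algebraic hull, asserting that the monoid bound ``forces'' its coordinate ring to be finitely generated of the asserted dimension. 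That assertion \emph{is} the theorem: the coordinate ring of the pro-algebraic hull of $\rho(G)$ is exactly the object whose finite generation is in question, so as written the last step is circular rather than a proof. (A side error in the same spirit: your claim $\dim_\C\overline{\rho_1(G)}\le\dim_\R G$ is false---for $\rho_1(t)=\diag(e^t,e^{\iu t})$ the Zariski closure is all of $(\C^*)^2$---another symptom of how treacherous the global picture is, though this particular claim does not enter your final count.)

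The missing idea, which is the paper's key move, is to never work globally: by Iwasawa--Malcev each $g$ factors \emph{in the group} as $g=h_1\cdots h_r k$ with $h_i=\exp(t_iV_i)$ and $k\in K$, continuously in $g$; matrix coefficients are multiplicative at each jet level, so $(\rho(h_1)\cdots\rho(h_r)\rho(k)x^\alpha,x^\beta)$ is a finite sum of products of coefficients of the factors; and for a \emph{single} one-parameter group $e^{tV}$ one fixes a triangularization of the linear part of $V$ once and for all, after which $j^pV$ is lower-triangular in the degree-lexicographic monomial basis with diagonal entries $\sum_i\alpha_i\lambda_i$, placing all its coefficients in $\C[e^{\lambda_1t},\ldots,e^{\lambda_nt},t]$ uniformly in $p$. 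All three of your obstructions then evaporate: the $\lambda_j$ are fixed eigenvalues of a single linear operator (no monodromy), there is no torus to split, and triangularity substitutes for an equivariant splitting of the filtration. Note also that ``the diffeomorphism $G\cong K\times\R^d$'' is not by itself enough for your splitting of $c_{i,\gamma}$: one needs the decomposition to be multiplicative in the group, since the factorization of matrix coefficients comes from $\rho(g)=\rho(h_1)\cdots\rho(h_r)\rho(k)$. Your treatment of the compact factor via representative functions of $\overline{\rho(K)}$, in place of the paper's Bochner linearization (Theorem~\ref{thm:bochner}), is sound and would work.
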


Theorem~\ref{thm:main} generalizes the Noetherianity result of
\cite{SeigalYakov} to the case of arbitrary Lie groups (with finitely
many connected components) acting formally on $(\C^n,0)$. The proof
of the theorem is presented in~\secref{sec:proof-main}.

\subsection{Local intersection dynamics}

By a \emph{germ of a formal scheme} we shall mean a closed subset
$V\subset\spec \C[[x]]$. In other words, $V$ will be identified with an ideal $I_V$
of the ring $\C[[x]]$. Two germs of formal schemes $V,W$ of complementary
dimension are said to intersect \emph{properly} if $I_V+I_W$ is an
ideal of finite codimension in $\C[[x]]$. In this case, we define their
intersection multiplicity at the origin to be
\begin{equation}
  (V,W) := \dim_\C \C[[x]]/(I_V+I_W)
\end{equation}
\begin{Rem}
  Note that this definition of multiplicity gives the correct notion
  when $V,W$ are complete intersections. In the general case a more
  delicate definition is needed to provide the proper notion of
  intersection multiplicities, cf. \cite[Section 8]{fulton:it}.
  However, for our purposes this naive definition will suffice, since
  it provides an upper bound for the true intersection multiplicity
  \cite[Proposition 8.2, (a)]{fulton:it}.
\end{Rem}

Since the ring $\C[[x]]$ is Noetherian, its ideals are finitely
generated. The following lemma is standard. We provide a proof for the
convenience of the reader.

\begin{Lem}[\protect{\cite[Lemma 3]{SeigalYakov}}]\label{lem:mult-cond}
  For any ideal $I\subset\C[[x]]$ and any $m\in\N$, the condition
  $\dim_\C \C[[x]]/I>m$ is equivalent to a finite number of algebraic
  conditions imposed on the $m$-jets of the generators of $I$.
\end{Lem}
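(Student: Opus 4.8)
The plan is to reduce the a priori infinite-dimensional condition $\dim_\C \C[[x]]/I>m$ to a condition on the finite-dimensional jet algebra $\C_m[[x]]=\C[[x]]/\fm^{m+1}$, and then to recognize the latter as the vanishing of certain minors of a matrix whose entries depend polynomially on the jet coefficients. Since $\C[[x]]$ is Noetherian, write $I=(g_1,\dots,g_r)$ and let $\bar g_i\in\C_m[[x]]$ denote the $m$-jets of the generators. Put $\bar I:=(I+\fm^{m+1})/\fm^{m+1}\subset\C_m[[x]]$, which is the ideal of $\C_m[[x]]$ generated by $\bar g_1,\dots,\bar g_r$ and therefore depends only on the $m$-jets. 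The central claim, whose proof is the main obstacle, is the equivalence
\[
  \dim_\C \C[[x]]/I>m \quad\Longleftrightarrow\quad \dim_\C \C_m[[x]]/\bar I>m .
\]

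To prove the claim, set $Q:=\C[[x]]/I$ and note the identification $Q/\fm^{m+1}Q=\C[[x]]/(I+\fm^{m+1})=\C_m[[x]]/\bar I$. The implication $\Leftarrow$ is immediate from the surjection $Q\twoheadrightarrow Q/\fm^{m+1}Q$, which gives $\dim_\C Q\ge\dim_\C Q/\fm^{m+1}Q$. For the hard direction, suppose $\dim_\C Q>m$ and consider the $\fm$-adic filtration $Q\supseteq\fm Q\supseteq\cdots\supseteq\fm^{m+1}Q$. By Nakayama's lemma, $\fm^jQ=\fm^{j+1}Q$ forces $\fm^jQ=0$. If $\fm^{m+1}Q=0$ then $Q/\fm^{m+1}Q=Q$ and we are done (this case also covers $\dim_\C Q=\infty$, since then $\fm^{m+1}Q\ne0$ lest $Q$ be finite-dimensional). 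If $\fm^{m+1}Q\ne0$, then for each $0\le j\le m$ the inclusion $\fm^jQ\supseteq\fm^{j+1}Q$ must be strict, as equality would give $\fm^jQ=0\supseteq\fm^{m+1}Q$, contradicting $\fm^{m+1}Q\ne0$. Hence each graded piece is nonzero and
\[
  \dim_\C Q/\fm^{m+1}Q=\sum_{j=0}^{m}\dim_\C \fm^jQ/\fm^{j+1}Q\ge m+1>m .
\]
This is precisely the point where the truncation at degree $m$ is guaranteed to already detect the condition.

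It remains to express the finite-dimensional condition algebraically. As a $\C$-vector space, $\bar I$ is spanned by the finite set $\{\,\overline{x^\gamma g_i}:\lvert\gamma\rvert\le m,\ 1\le i\le r\,\}$ of truncated products, since in $\C_m[[x]]$ every coefficient may be taken of degree $\le m$. Writing the coordinate vectors of these spanning elements in the monomial basis of $\C_m[[x]]$ as the columns of a matrix $A$, whose entries are $\C$-linear in the coefficients of the jets $\bar g_i$, we obtain $\dim_\C\bar I=\operatorname{rank} A$ and hence $\dim_\C \C_m[[x]]/\bar I=D-\operatorname{rank} A$, where $D=\binom{n+m}{n}=\dim_\C\C_m[[x]]$. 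Thus the condition $\dim_\C\C_m[[x]]/\bar I>m$ reads $\operatorname{rank} A<D-m$, i.e.\ the simultaneous vanishing of all $(D-m)\times(D-m)$ minors of $A$. Each such minor is a polynomial in the entries of $A$, and therefore a polynomial in the coefficients of the $m$-jets $\bar g_1,\dots,\bar g_r$; as there are finitely many minors, this exhibits the desired condition as a finite set of algebraic conditions on the $m$-jets of the generators, completing the argument.
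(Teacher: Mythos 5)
Your proof is correct, and its overall frame coincides with the paper's: both reduce the statement to the equivalence $\dim_\C \C[[x]]/I>m \Leftrightarrow \dim_\C \C_m[[x]]/j_m(I)>m$ (your $\bar I$ is exactly the paper's $j_m(I)$), and both then encode the truncated condition as the vanishing of the $(D-m)\times(D-m)$ minors of the matrix of the multiplication map $(h_1,\dots,h_r)\mapsto \sum_i h_i\bar g_i$ on $\C_m[[x]]$, whose entries are linear in the jet coefficients. The genuine difference is how you prove the hard direction of the equivalence. The paper argues inside $\C_m[[x]]$: assuming $\dim_\C \C_m[[x]]/j_m(I)\le m$, it takes a monomial $y=x_{i_1}\cdots x_{i_m}$ and its chain of partial products $y_0,\dots,y_m$, uses their linear dependence modulo $j_m(I)$ together with the invertibility of a cofactor with nonzero constant term to conclude $j_m(\fm^m)\subset j_m(I)$, and then must invoke an $\fm$-adic completeness result (the cited Proposition~7.12 of Eisenbud) to lift this to $\fm^m\subset I$. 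You instead work downstairs in $Q=\C[[x]]/I$ with the $\fm$-adic filtration and Nakayama's lemma: any equality $\fm^jQ=\fm^{j+1}Q$ kills $\fm^jQ$, so either $\fm^{m+1}Q=0$ (and $Q$ equals its own truncation) or all $m+1$ graded pieces $\fm^jQ/\fm^{j+1}Q$, $0\le j\le m$, are nonzero, forcing $\dim_\C Q/\fm^{m+1}Q\ge m+1$. Your route avoids both the monomial trick and the lifting step (Nakayama does that work for you), is shorter, and generalizes verbatim to any Noetherian local ring; the paper's route is more explicit and yields the stronger structural conclusion $\fm^m\subset I$, hence the exact equality $\dim_\C \C[[x]]/I=\dim_\C \C_m[[x]]/j_m(I)$ whenever either side is at most $m$.
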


\begin{proof}
  Denote by $j_m:\C[[x]]\to \C_m[[x]]$ the natural projection. We
  claim first that $\dim_\C \C[[x]]/I>m$ if and only if
  $\dim_\C \C_m[[x]]/j_m(I)>m$.

  If $\dim_\C \C_m[[x]]/j_m(I)>m$ then clearly $\dim_\C \C[[x]]/I>m$ as
  well. Conversely, suppose $\dim \C_m[[x]]/j_m(I)\le m$. Then
  $j_m(\fm^m)\subset j_m(I)$. Indeed, let
  $y=x_{i_1}\cdots x_{i_m}\in\fm^m$ where $1\le i_1,\ldots,i_m\le n$.
  Write $y_j=x_{i_1}\cdots x_{i_j}$ for $j=0,\ldots,m$. Since these are $m+1$ elements,
  they must be linearly dependent over $\C$ in $\C_m[[x]]/j_m(I)$,
  \begin{equation}
    c_0 y_0 + \cdots + c_m y_m \in j_m(I), \qquad c_0,\ldots,c_m\in\C.
  \end{equation}
  If $c_k$ is the first non-zero coefficient we have
  \begin{equation}
    y_k(c_k+c_{k+1}x_{i_{k+1}}+\cdots+c_m x_{i_{k+1}}\cdots x_{i_m}) \in j_m(I).
  \end{equation}
  The element in the parenthesis on the left hand side has a non-zero
  constant term, hence it is invertible in $\C_m[[x]]$, and we
  conclude that $y_k\in j_m(I)$ and certainly also $y\in j_m(I)$.
  Since this is true for any monomial $y\in\fm^m$ we have
  $j_m(\fm^m)\subset j_m(I)$ as claimed. It now follows by an
  $\fm$-completeness argument that $\fm^m\subset I$ as well
  \cite[Proposition~7.12]{Eis:book}. Then
  $\dim \C[[x]]/I=\dim \C_m[[x]]/j_m(I)\le m$ as claimed.

  Let $f_1,\ldots,f_g\in I$ denote a set of generators for $I$ and 
  \begin{equation}
    T:\C_m[[x]]^{\oplus g} \to \C_m[[x]], \qquad T(h_1,\ldots,h_g) = h_1f_1+\ldots+h_g f_g.
  \end{equation}
  Then $\Im T=j_m(I)$, hence the condition $\dim_\C \C_m[[x]]/j_m(I)>m$ is
  equivalent to the condition that the corank of $T$ is larger than
  $m$. Finally, one may of course express this condition by the vanishing
  of the appropriate minors of $T$, which may be viewed as algebraic
  equations involving the coefficients of $j_m(f_1),\ldots,j_m(f_g)$.
\end{proof}

Let $G$ be a Lie group and $\rho:G\to\Diff[[\C^n,0]]$ a Lie group
homomorphism. Then $G$ acts on germs of formal schemes: $g^*V$ is
defined to be the germ associated to the pull-back of the ideal $I_V$
by $\rho(g^{-1})$.

We can now state and prove our result on dynamics of intersections
for formal Lie group actions. The following is a relatively direct
consequence of Theorem~\ref{thm:main}.

\begin{Thm}\label{thm:group-intersection}
  Let $G$ be a Lie group with finitely many connected components and
  $\rho:G\to\Diff[[\C^n,0]]$ a Lie group homomorphism. Let $V,W$ be two
  germs of formal schemes, and define
  \begin{equation}
    \mu_g : G\to\N, \qquad \mu_g = (g^*V,W).
  \end{equation}
  Then there exists a constant $N\in\N$ depending on $V,W$ and $G$
  such that
  \begin{equation}
    \forall g\in G: \quad \mu_g<\infty \implies \mu_g<N.
  \end{equation}  
\end{Thm}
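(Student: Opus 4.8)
The plan is to run the Noetherianity scheme of Seigal and Yakovenko, now powered by Theorem~\ref{thm:main} in place of the commutative Noetherianity result. First I would fix finite generating sets $f_1,\dots,f_r$ of $I_V$ and $h_1,\dots,h_s$ of $I_W$. For each $g\in G$ the sum $I_{g^*V}+I_W$ is generated by $\rho(g^{-1})(f_1),\dots,\rho(g^{-1})(f_r),h_1,\dots,h_s$, and $\mu_g=\dim_\C\C[[x]]/(I_{g^*V}+I_W)$, so by Lemma~\ref{lem:mult-cond} the condition $\mu_g>m$ is, for each fixed $m$, the simultaneous vanishing of finitely many fixed ($\Z$-coefficient) polynomials in the coefficients of the $m$-jets of these generators. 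The jets of the $h_j$ are constant, while the $x^\beta$-coefficient of $\rho(g^{-1})(f_i)$ equals $\sum_\alpha (f_i,x^\alpha)\,\rho_{\alpha,\beta}(g^{-1})$; this is a finite sum over $|\alpha|\le m$ whenever $|\beta|\le m$, since $\rho(g^{-1})$ preserves $\fm$ and hence does not lower order. Thus every $m$-jet coefficient is a fixed $\C$-linear combination of the functions $g\mapsto\rho_{\alpha,\beta}(g^{-1})$.

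To place these functions in a Noetherian algebra I would apply Theorem~\ref{thm:main} to obtain a Noetherian $\C$-algebra $R_\rho\subset C(G)$ containing all matrix coefficients $\rho_{\alpha,\beta}$, and then transport it along the inversion map $\iota\colon g\mapsto g^{-1}$. Since $\iota$ is a homeomorphism of $G$, precomposition $f\mapsto f\circ\iota$ is a $\C$-algebra automorphism of $C(G)$, so $R:=\{f\circ\iota : f\in R_\rho\}$ is again Noetherian (of the same Krull dimension) and contains every $g\mapsto\rho_{\alpha,\beta}(g^{-1})$. As $R$ is a $\C$-algebra it contains the $m$-jet coefficients above, and hence also the defining polynomials produced by Lemma~\ref{lem:mult-cond}. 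Consequently the set $G_m:=\{g\in G:\mu_g>m\}$ is the common zero locus $Z(J_m)$ of an ideal $J_m\subset R$ generated by finitely many elements.

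The conclusion then follows from the ascending chain condition in $R$. Let $\mathfrak a_m\subset R$ be the ideal of all elements of $R$ vanishing on $G_m$. Since $\mu_g>m+1$ implies $\mu_g>m$, the sets are nested, $G_0\supseteq G_1\supseteq\cdots$, hence $\mathfrak a_0\subseteq\mathfrak a_1\subseteq\cdots$. One checks $G_m=Z(\mathfrak a_m)$: the inclusion $J_m\subseteq\mathfrak a_m$ gives $Z(\mathfrak a_m)\subseteq Z(J_m)=G_m$, and the reverse inclusion is immediate. Because $R$ is Noetherian the chain stabilizes, $\mathfrak a_N=\mathfrak a_{N+1}=\cdots$ for some $N$, whence $G_N=Z(\mathfrak a_N)=Z(\mathfrak a_{N+1})=G_{N+1}=\cdots$, and therefore $G_N=\bigcap_{m\ge N}G_m=\{g:\mu_g=\infty\}$. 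In other words no $g$ satisfies $N<\mu_g<\infty$, so $\mu_g<\infty$ forces $\mu_g\le N$, and the theorem holds with constant $N+1$.

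Given Theorem~\ref{thm:main}, this deduction is essentially routine, and the only point requiring genuine care — and the only place the non-commutativity of $G$ intrudes — is the appearance of $g^{-1}$ in the generators of $I_{g^*V}$: one must know that the functions $g\mapsto\rho_{\alpha,\beta}(g^{-1})$ still lie in a Noetherian algebra. The transport-by-inversion observation settles this cleanly, leaving the standard zero-locus-plus-ACC argument to finish the proof.
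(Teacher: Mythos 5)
Your proof is correct and follows essentially the same route as the paper: combine Theorem~\ref{thm:main} with Lemma~\ref{lem:mult-cond} to express each set $\{g:\mu_g>m\}$ as the zero locus of an ideal in a Noetherian subalgebra of $C(G)$, then conclude by the ascending chain condition. The only addition is your explicit transport of $R_\rho$ along the inversion map to accommodate the $g^{-1}$ appearing in the generators of $I_{g^*V}$; the paper treats these Taylor coefficients directly as matrix coefficients of $\rho$ (and the substitution $g\mapsto g^{-1}$ is in any case harmless since the statement quantifies over all of $G$), so this is a point of extra care rather than a genuinely different argument.
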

\begin{proof}
  Let $v_1,\ldots,v_s$ and $w_1,\ldots,w_t$ denote generators for the
  ideals $I_V$ and $I_W$ respectively. Let $R_\rho$ denote the algebra of
  Theorem~\ref{thm:main}. Then the functions mapping $g$ to the Taylor
  coefficients of $g^*v_i$ and $g^*w_i$ are matrix coefficients of
  $\rho$, hence elements of $R_\rho$. Then, by
  Lemma~\ref{lem:mult-cond} the set $\{g:(g^*V,W)>k\}$ is defined by
  the common zeros of an ideal $I_k\subset R_\rho$. Moreover, one can assume
  that $I_k\subset I_{k+1}$ (otherwise simply add to each ideal $I_k$
  the union of all $I_l$ for $l<k$).

  Since the algebra $R_\rho$ is Noetherian, the chain $I_k$ stabilizes
  at some finite index $k=N-1$. Then the set of common zeros of the
  ideals also stabilizes, i.e. for any $g\in G$ with finite $\mu_g$ we
  have in fact $\mu_g<N$.
\end{proof}

Theorem~\ref{thm:group-intersection} asserts the uniform boundedness
of the intersection multiplicity between a formal scheme $W$ and a
certain family of formal schemes $g^*V$, where $g$ varies over a Lie
group with finitely many connected components (possibly
\emph{non-compact}). We give two examples to illustrate that such a
result extends neither to arbitrary analytic families (even
one-dimensional), nor to arbitrary groups of diffeomorphisms (even
finitely generated).

\begin{Ex}
  We consider $\C^2$ with the coordinates $x,y$. Let $W:=\{y=0\}$
  and $V_t:=\{y=P_t(x)\}$ for $t\in\R$, where
  \begin{equation}
    P_t(x) = \sum_{j=1}^\infty (e^{\frac{\pi i t}j}-e^{-\frac{\pi i t}j}) x^j.
  \end{equation}
  Then $(V_t,W)$ is equal to the order of zero of $P_t$ at $x=0$. In
  particular, for $p\in\N$ prime it is easy to verify that
  $(V_{(p-1)!},W)=p$, and the multiplicities are therefore not
  uniformly bounded over $t\in\R$.

  The key feature in this example is that the set of exponents
  $e^{\l_j t}$ appearing in the expansion of $P_t(x)$ do not form a
  finitely-generated semigroup under multiplication. We shall see that
  this does not occur in Lie group actions.
\end{Ex}

\begin{Ex}[\protect{\cite[Example~4]{SeigalYakov}}]\label{ex:non-solvable}
  Let $g_1,g_2\in\Diff[[\C^1,0]]$ be two germs of formal
  diffeomorphisms with the expansions
  \begin{equation}
    g_j(x) := x+c_jx^{\nu_j+1}+\cdots, \qquad c_j\neq0,\, \nu_j\in\N,\, j=1,2. 
  \end{equation}
  If $\nu_1\neq\nu_2$ then a simple computation
  \cite[Proposition~6.11]{iy:book} shows that the commutator
  $g_3:=[g_1,g_2]$ admits a similar expansion with
  $\nu_3=\nu_1+\nu_2$. Repeating this consideration, we see that the
  group $G\subset\Diff[[\C^1,0]]$ generated by $g_1,g_2$ contains
  diffeomorphisms with a fixed point of arbitrarily high index.
  Interpreting the index of a fixed point as the corresponding
  intersection multiplicity with the diagonal as explained
  in~\secref{sec:intro}, one can use this to construct an example
  where the conclusion of Theorem~\ref{thm:group-intersection} fails.
\end{Ex}

\subsection{Finite jet determination for Lie group actions}

As another application of Theorem~\ref{thm:main}, we give an immediate
proof of a result of Baouendi et. al. concerning finite jet
determination in \cite[Theorem 2.10 and Proposition
5.1]{Baouendi:jet-determination}.

\begin{Thm}[\protect{\cite[Proposition 5.1]{Baouendi:jet-determination}}]
  Let $G$ be a Lie group with finitely many connected components
  and $\rho:G\to\Diff[[\C^n,0]]$ a continuous injective homomorphism.
  Then there exists $p\in\N$ such that, for any $g_1,g_2\in G$,
  \begin{equation}
    j^p\rho(g_1) = j^p\rho(g_2) \quad \text{if and only if} \quad g_1=g_2
  \end{equation}
\end{Thm}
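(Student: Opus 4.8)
The plan is to deduce the statement from the stabilization of the descending chain of kernels $\ker\rho_p$, by the same Noetherian mechanism used in the proof of Theorem~\ref{thm:group-intersection}. Since $j^p\rho(g_1)=j^p\rho(g_2)$ is the same as $\rho_p(g_1)=\rho_p(g_2)$, and $\rho_p$ is a homomorphism of groups, this happens exactly when $g_1g_2^{-1}\in\ker\rho_p$. As equality $g_1=g_2$ trivially forces equality of all jets, the whole theorem is equivalent to the assertion that $\ker\rho_p=\{e\}$ for some $p\in\N$. So I would reformulate the goal as: find $p$ for which $\rho_p$ is injective.

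First I would exhibit $\ker\rho_p$ as the zero locus of an ideal of $R_\rho$. An element $g$ lies in $\ker\rho_p$ precisely when $\rho(g)$ acts as the identity on $\C_p[[x]]$; since $\rho(g)$ is an algebra automorphism, this reduces to the finitely many conditions $\rho(g)(x_i)\equiv x_i\pmod{\fm^{p+1}}$ for $i=1,\dots,n$. Reading off coefficients, these amount to finitely many equations of the form $\rho_{\alpha,\beta}(g)=\delta_{\alpha,\beta}$, each $\rho_{\alpha,\beta}$ being a matrix coefficient of $\rho$ and hence, by Theorem~\ref{thm:main}, an element of $R_\rho$. Thus $\ker\rho_p$ is the common zero set $Z(J_p)$ of the ideal $J_p\subset R_\rho$ generated by these finitely many functions, and since increasing $p$ only adjoins further generators we may arrange $J_p\subseteq J_{p+1}$.

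Next I would invoke Noetherianity of $R_\rho$: the ascending chain $J_p$ stabilizes at some $J_{p_0}$, so the zero sets stabilize and $\ker\rho_p=\ker\rho_{p_0}$ for all $p\ge p_0$. On the other hand, $\rho$ is the inverse limit of the maps $\rho_p$, whence $\bigcap_{p}\ker\rho_p=\ker\rho$, which is trivial because $\rho$ is injective. Since the sets $\ker\rho_p$ are decreasing and eventually constant, their intersection coincides with the stable value $\ker\rho_{p_0}$; therefore $\ker\rho_{p_0}=\{e\}$, the map $\rho_{p_0}$ is injective, and the theorem follows with $p=p_0$.

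I expect the argument to be essentially routine once Theorem~\ref{thm:main} is in hand; the only points genuinely requiring care are the identification of $\ker\rho_p$ with the zero set of a \emph{finitely generated} ideal of $R_\rho$ (so that the ascending chain condition applies) and the verification that $\bigcap_p\ker\rho_p=\ker\rho$. All the real difficulty — the existence of the Noetherian algebra $R_\rho\subset C(G)$ containing the matrix coefficients, which is exactly where the hypotheses of continuity and of finitely many connected components enter — is already absorbed into Theorem~\ref{thm:main}.
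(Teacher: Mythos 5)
Your proposal is correct and follows essentially the same route as the paper: reduce via $g_1g_2^{-1}$ to showing some $\rho_p$ has trivial kernel, express the jet-identity conditions as the zero locus of an increasing chain of ideals in the Noetherian algebra $R_\rho$ of Theorem~\ref{thm:main}, and conclude from stabilization of the chain together with injectivity of $\rho$. The only cosmetic difference is your emphasis on finite generation of the ideals $J_p$, which is not actually needed — the ascending chain condition in a Noetherian ring applies to all ideals.
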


We note that the statement of \cite[Proposition
5.1]{Baouendi:jet-determination} refers to analytic rather than formal
diffeomorphisms, and is implied by the statement above. We also note
that the original formulation works in the real (rather than complex)
category, but the proof is not affected by this difference.

\begin{proof}
  By considering $g=g_1g_2^{-1}$, it is clearly enough to prove
  the existence of $p\in\N$ such that for any $g\in G$,
  \begin{equation}
    j^p\rho(g) = \id \quad \text{if and only if} \quad g=e.
  \end{equation}
  Let $R_\rho$ denote the algebra of Theorem~\ref{thm:main}. Then
  the Taylor coefficients of $\rho(g)$ are elements of $R_\rho$,
  and in particular the condition $j^k\rho(g)=\id$ can be expressed
  as the common zero locus of an ideal $I_k\subset R_\rho$, where
  the chain $I_k$ is increasing.

  Since the algebra $R_\rho$ is Noetherian, the chain $I_k$ stabilizes at
  some finite index $k=p$. Then the set of common zeros of the
  ideals also stabilizes, i.e. the condition $j^p\rho(g)=\id$ is
  equivalent to the condition $\rho(g)=\id$. Since $\rho$ is an injection,
  the claim is proved.
\end{proof}

\subsection{Embedding in abelian Lie groups and formal flows}

In this subsection we state a theorem demonstrating that the results
of Arnold \cite[Theorem 1]{Arnold} and Seigal-Yakovenko \cite[Theorem
1]{SeigalYakov} are formal consequences of
Theorem~\ref{thm:group-intersection}. We note, to be clear, that in
the cases considered in these papers our proof essentially follows the
local computation of Arnold and the Noetherianity argument of
Seigal-Yakovenko. However, the embedding results given in this
subsection appear to be new and may be of some independent interest.

\begin{Thm}\label{thm:comm-gp-embedding}
  Let $\rho:\Z^p\times\R^q\to\Diff[[\C^n,0]]$ be a homomorphism of Lie
  groups. Then there exists an abelian Lie group $G$ with finitely
  many connected components (in fact, a linear algebraic group) and a
  homomorphism of Lie groups $\rho^*:G\to\Diff[[\C^n,0]]$ such that
  $\Im\rho\subset\Im\rho^*$.
\end{Thm}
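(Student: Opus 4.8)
The plan is to split each generator of $\Im\rho$ into commuting semisimple and unipotent parts via the Jordan--Chevalley decomposition in the pro-algebraic group $\Diff[[\C^n,0]]$, then to collect the unipotent parts into a vector group and the semisimple parts into a diagonalizable group, and finally to take $G$ to be their direct product. First I would record the data: let $F_1,\dots,F_p\in\Diff[[\C^n,0]]$ be the images under $\rho$ of the standard generators of $\Z^p$, and let $V_1,\dots,V_q\in\Der[[\C^n,0]]$ be the infinitesimal generators of the $q$ one-parameter subgroups $\rho|_{\R^q}$; since $\rho$ is a homomorphism and $\Z^p\times\R^q$ is abelian, these all commute. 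Each jet group $\Diff_k[[\C^n,0]]$ is a linear algebraic group over $\C$ and the projections $\Diff_{k+1}\to\Diff_k$ are morphisms of algebraic groups, so $\Diff[[\C^n,0]]$ is pro-algebraic. Applying the Jordan decomposition jet by jet yields $F_i=S_iU_i$ with $S_i$ semisimple and $U_i=\exp N_i$ unipotent (with $N_i$ a nilpotent formal vector field), and $V_j=(V_j)_s+(V_j)_n$. Because morphisms of algebraic groups preserve Jordan decompositions, these are compatible across jet levels and define genuine elements of $\Diff[[\C^n,0]]$ and $\Der[[\C^n,0]]$; because the Jordan components of commuting elements commute, all of $S_i,N_i,(V_j)_s,(V_j)_n$ commute pairwise.

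For the unipotent directions, the nilpotent fields $N_1,\dots,N_p,(V_1)_n,\dots,(V_q)_n$ span a finite-dimensional abelian Lie algebra $\mathfrak u\subset\Der[[\C^n,0]]$ (of dimension $\le p+q$). Since exponentiation of a nilpotent derivation is polynomial at each jet level, $U:=\exp(\mathfrak u)$ is a unipotent abelian linear algebraic group isomorphic to $(\C,+)^{\dim\mathfrak u}$, and $X\mapsto\exp X$ is a homomorphism of algebraic groups into $\Diff[[\C^n,0]]$. By construction each $U_i$ and each $\exp\!\big(\sum_j s_j(V_j)_n\big)$ lies in the image of $U$.

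For the semisimple directions, the $S_i$ together with the flows $\exp(t(V_j)_s)$ generate a commutative group of semisimple elements; let $D$ be its Zariski closure, formed jet by jet and then passed to the inverse limit. Then $D$ is diagonalizable, i.e.\ a torus times a finite abelian group. The only genuine obstacle to finiteness — and the crux of the argument — is to bound $\dim D$. For this I would show that linearization $D\to\GL_n$ is injective: a formally semisimple diffeomorphism whose linear part is the identity has eigenvalues on each jet space $\C_k[[x]]$ equal to the monomials $\lambda^\alpha$ ($|\alpha|\le k$) in the eigenvalues $\lambda_1,\dots,\lambda_n$ of its linear part, so if the linear part is the identity all these eigenvalues are $1$, and a diagonalizable operator with every eigenvalue equal to $1$ is the identity. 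Hence $D$ embeds into the Zariski closure in $\GL_n$ of the group generated by the semisimple linear parts $(\mathrm dF_i)_{0,s}$ and $\exp\!\big(t(\mathrm dV_j)_{0,s}\big)$, which is a diagonalizable subgroup of dimension $\le n$. Moreover this image in $\GL_n$ is independent of the jet order $k$, so each jet-level closure maps isomorphically onto it and the transition maps are isomorphisms; thus the inverse limit $D$ is genuinely this finite-dimensional diagonalizable group, with finitely many connected components.

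Finally I would assemble $G:=D\times U$, an abelian linear algebraic group over $\C$ with finitely many connected components, and define $\rho^*\colon G\to\Diff[[\C^n,0]]$ by $(d,u)\mapsto du$; this is a homomorphism precisely because $D$ and $U$ commute. Each generator satisfies $F_i=S_iU_i\in\Im\rho^*$, and an arbitrary flow element decomposes as $\exp\!\big(\sum_j s_j(V_j)_s\big)\cdot\exp\!\big(\sum_j s_j(V_j)_n\big)$ with first factor in $D$ and second in $U$, hence lies in $\Im\rho^*$. Since every element of $\Im\rho$ is a finite product of such elements and $D,U$ commute, collecting factors gives $\Im\rho\subset\Im\rho^*$, as required. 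I expect the semisimple finiteness of the preceding paragraph (the injectivity of linearization on $D$, which prevents the dimension from growing with the jet order) to be the main point; the unipotent part and the final assembly are essentially formal.
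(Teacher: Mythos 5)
Your proposal is correct, and it takes a genuinely different route from the paper's. The paper argues by induction on $p$, embedding one discrete generator at a time: its key ingredient is Proposition~\ref{prop:diff-embed} (a single $F$ lies in the image of an abelian linear algebraic group, with the extra clause that anything commuting with $F$ commutes with that whole image, which is what makes the induction close), and that proposition rests on Lemma~\ref{lem:Gp-Gq} --- the Zariski closures $G_k$ of $\langle j^k F\rangle$ stabilize from the first non-semisimple jet level onward --- proved via the explicit matrix-coefficient description of Lemma~\ref{lem:power-coefficients}: the coefficients of $F^t$ lie in $\C[\l_1^t,\dots,\l_n^t,t]$, and both $\l_i^t$ and $t$ are realized by regular functions already at that level. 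You instead perform a simultaneous Jordan--Chevalley decomposition of all generators in the pro-algebraic group $\Diff[[\C^n,0]]$ (legitimate, as you note, by functoriality of the decomposition under the jet projections), collect the unipotent parts into a vector group via $\log$/$\exp$, and prove stabilization for the semisimple part already at jet level one, via the correct observation that a diagonalizable subgroup meets the unipotent kernel of $\Diff_k[[\C^n,0]]\to\GL(n,\C)$ trivially. This avoids both the induction and the commutant bookkeeping, treats the $\Z^p$ and $\R^q$ factors on the same footing, and yields the structure $G\cong D\times(\C,+)^m$ with an explicit dimension bound $\le n+p+q$; it essentially realizes the alternative approach the paper itself attributes to the referee (Jordan decomposition via \cite{martinet}) in the remark following Corollary~\ref{cor-F-embedding}, and your $D\times(\C,+)^m$ is exactly the structure $T$ or $T\times\C_{\text{a}}$ described in Remark~\ref{rem:G-structure}. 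What the paper's route buys in exchange is an elementary, self-contained argument that never invokes pro-algebraic Jordan theory, reuse of the power-coefficient lemma that drives the rest of the paper, and the identification of the precise jet level at which the closures stabilize, which feeds Remark~\ref{rem:G-structure} and the computation of the exponent $k$ in the final Takens-type corollary. Two harmless imprecisions in your write-up: the $N_i$ are nilpotent only jet-by-jet (pro-nilpotent), not as operators on $\C[[x]]$; and it is cleanest to take the unipotent factor of $G$ to be the abstract vector group $(\mathfrak{u},+)$ with $\rho^*:=\exp$ on that factor, so that injectivity of $\exp$ never needs to be discussed.
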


The proof of Theorem~\ref{thm:comm-gp-embedding} is presented
in~\secref{sec:proof-main}.

\begin{Rem}
  We remark that \cite{SeigalYakov} uses a homomorphism of the
  \emph{complex} Lie group $\Z^p\times\C^q$, whereas we state
  Theorem~\ref{thm:comm-gp-embedding} for real Lie groups.
\end{Rem}

We record a simple corollary about embeddings of a diffeomorphism
in the flow of a formal vector field.

\begin{Cor} \label{cor-F-embedding}
  Let $F\in\Diff[[\C^n,0]]$ be a formal diffeomorphism. Then there
  exists some $k\in\N^+$ such that $F^k$ can be embedded in a formal
  flow, i.e. there exists a formal vector field $V\in\Der[[\C^n,0]]$
  such that $e^V=F^k$.
\end{Cor}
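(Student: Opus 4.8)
The plan is to realize $F$ as $\rho^*(g)$ for a single element $g$ of an abelian algebraic group, and then to exploit the divisibility built into the connected component of that group. First I would apply Theorem~\ref{thm:comm-gp-embedding} to the homomorphism $\rho:\Z\to\Diff[[\C^n,0]]$ given by $\rho(m)=F^m$ (this is the case $p=1$, $q=0$). This produces an abelian linear algebraic group $G$ with finitely many connected components together with a Lie group homomorphism $\rho^*:G\to\Diff[[\C^n,0]]$ satisfying $\Im\rho\subset\Im\rho^*$. In particular $F=\rho(1)$ lies in $\Im\rho^*$, so there is some $g\in G$ with $\rho^*(g)=F$.

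Next I would pass to the identity component $G^0$. Since $G/G^0$ is finite, the image of $g$ there has some finite order $k\in\N^+$, and therefore $g^k\in G^0$. The group $G^0$ is a connected abelian (real) Lie group, so its exponential map $\exp_G:\mathfrak g\to G^0$ is surjective; hence there is an element $X$ of the Lie algebra $\mathfrak g$ of $G$ with $\exp_G(X)=g^k$.

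Finally I would transport $X$ through $\rho^*$ to obtain the desired vector field. The map $t\mapsto\rho^*(\exp_G(tX))$ is a continuous (indeed analytic) one-parameter subgroup of $\Diff[[\C^n,0]]$, so by the correspondence recalled in \secref{sec:notation} it coincides with $\tau_V$ for a uniquely determined infinitesimal generator $V\in\Der[[\C^n,0]]$, i.e. $\rho^*(\exp_G(tX))=e^{tV}$ for all $t\in\R$. Setting $t=1$ then gives $e^V=\rho^*(\exp_G(X))=\rho^*(g^k)=\rho^*(g)^k=F^k$, as required.

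The substantive content is entirely carried by Theorem~\ref{thm:comm-gp-embedding}; the remaining steps are formal. The only points requiring mild care are the surjectivity of $\exp_G$ on $G^0$, which holds because every connected abelian Lie group is of the form $\R^a\times(S^1)^b$ and is covered by its Lie algebra, and the compatibility of the group exponential of $G$ with the formal exponential into $\Diff[[\C^n,0]]$, which is precisely the one-parameter-group correspondence of \secref{sec:notation} applied jet by jet. I expect no genuine obstacle beyond the invocation of Theorem~\ref{thm:comm-gp-embedding}.
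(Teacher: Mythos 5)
Your proof is correct and follows essentially the same route as the paper's: apply Theorem~\ref{thm:comm-gp-embedding} to the cyclic action $\rho(m)=F^m$, pass to the identity component of the resulting abelian group $G$, use surjectivity of the exponential map of a connected abelian Lie group to place $g^k$ on a one-parameter subgroup, and push that subgroup through $\rho^*$ to extract the infinitesimal generator $V$ via the one-parameter-group correspondence. The only cosmetic difference is that the paper takes $k=\abs{G/G_0}$ while you take $k$ to be the order of the image of $g$ in $G/G^0$, which changes nothing.
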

\begin{proof}
  Let $\rho:\Z\to\Diff[[\C^n,0]]$ be given by $\rho(1)=F$ and let
  $G$ and $\rho^*$ be as provided by Theorem~\ref{thm:comm-gp-embedding}.
  Let $G_0$ denote the connected component of the identity.

  By construction there exists $g\in G$ such that $\rho^*(g)=F$.
  Letting $k=\abs{G/G_0}$ we have $g^k\in G_0$. Since $G_0$ is an abelian
  connected Lie group, its exponential map is surjective. Thus there
  exists an analytic one-parameter group $\eta^*:\R\to G_0$ such that
  $\eta^*(1)=g^k$. Composing with $\rho^*$ we obtain an analytic one
  parameter group $\eta=\rho^*\circ\eta^*:\R\to\Diff[[\C^n,0]]$
  with $\eta(1)=F^k$. Then the derivative of $\eta$ at the origin
  gives a formal vector field $V$ satisfying $e^V=F^k$ as claimed.
\end{proof}

We note that Corollary~\ref{cor-F-embedding} is purely formal: even if
one starts with a holomorphic diffeomorphism $F$, we only guarantee
the existence of a formal vector field $V$ exponentiating to $F^k$.
The problem of embedding a holomorphic diffeomorphism in the flow of a
\emph{holomorphic} vector field is significantly more delicate and
admits a non-trivial analytic obstruction. For instance, a germ of a
holomorphic diffeomorphism $F:(\C,0)\to(\C,0)$ tangent to identity is
embeddable in a holomorphic flow if and only if its Ecalle-Voronin
modulus vanishes \cite[Theorem~21.31]{iy:book}.

\begin{Rem}
  The anonymous referee has mentioned that~\cite{martinet} constructs
  a Jordan decomposition for a formal vector field using the usual
  Jordan decomposition at the level of $k$-jet. A similar construction
  can be used to decompose the formal diffeomorphism $F$ in
  Corollary~\ref{cor-F-embedding} as a product $F=HG$ where $H$ is
  semi-simple, $G$ is unipotent, and $H$ and $G$ commute. This may
  provide an alternative approach to the proof of
  Corollary~\ref{cor-F-embedding}.
\end{Rem}

In~\secref{sec:embedding-proof} a more accurate description of the
group $G$ is provided. In particular we show how to compute $k$ above
in terms of the spectrum of the linear part of $F$, and use this to
give a new proof of a result of Takens \cite{Takens} about embeddings
of formal diffeomorphisms with a unipotent linear part.

\section{Proofs}
\label{sec:proofs}

In this section we prove the two theorems stated without proof
in~\secref{sec:statement}, namely
Theorems~\ref{thm:main}~and~\ref{thm:comm-gp-embedding}.

\subsection{Proof of Theorem~\ref{thm:main}}
\label{sec:proof-main}

Before presenting the proof we require two results from the theory
of Lie groups. The first is a decomposition theorem due independently
to Iwasawa \cite{iwasawa} and Malcev \cite{malcev}.

\begin{Thm}[\protect{\cite[Theorem 6]{iwasawa}}]\label{thm:iwasawa}
  Let $G$ be a connected Lie group, and $K$ a maximal compact subgroup.
  Then there exist subgroups $H_1,\ldots,H_r\subset G$ isomorphic
  to $\R$ such that any element $g\in G$ can be decomposed uniquely and
  continuously in the form
  \begin{equation}
    g=h_1\cdots h_r k \qquad h_i\in H_i \qquad k\in K.
  \end{equation}
\end{Thm}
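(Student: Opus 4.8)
The plan is to construct the one-parameter subgroups $H_1,\dots,H_r$ explicitly from the structure theory of connected Lie groups, with $r=\dim G-\dim K$, and then to prove that the multiplication map $\Phi\colon H_1\times\cdots\times H_r\times K\to G$, $(h_1,\dots,h_r,k)\mapsto h_1\cdots h_r k$, is a diffeomorphism; existence, uniqueness, and continuity of the decomposition then all follow at once from $\Phi$ being a bijective local diffeomorphism. I would take as black boxes the deep structural inputs: that for connected $G$ a maximal compact subgroup $K$ exists, is connected, and is unique up to conjugacy, and that $G/K$ is diffeomorphic to $\R^{d}$ with $d=\dim G-\dim K$ (Cartan--Iwasawa--Malcev). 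The remaining content is to replace the abstract Euclidean slice transverse to $K$ by an honest ordered product of closed $\R$-subgroups.

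For the construction I would treat the semisimple and solvable behaviour separately and then combine them. When $G$ is semisimple the classical Iwasawa decomposition $G=KAN$ is exactly what is wanted: $AN$ is a simply connected solvable subgroup transverse to $K$, the map $A\times N\times K\to G$ is a global diffeomorphism, and choosing a basis of $\mathfrak a\oplus\mathfrak n$ adapted to a flag of ideals (exponential coordinates of the second kind) writes $AN$ as an ordered product of $\dim(AN)$ one-parameter subgroups. When $G$ is simply connected and solvable, $K$ is trivial and the same second-kind coordinates $(t_1,\dots,t_m)\mapsto \exp(t_1X_1)\cdots\exp(t_mX_m)$ along a flag of ideals give a global diffeomorphism $\R^m\to G$, hence the desired ordered product directly. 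For a general $G$ I would peel these off by induction on $\dim G$, using the Levi decomposition to split off a semisimple quotient and the radical to supply the solvable directions, the base case being $G=K$ compact ($r=0$).

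The step I expect to be the main obstacle is the amalgamation in the mixed case, together with the requirement that each $H_i$ be a \emph{closed} subgroup isomorphic to $\R$ rather than a dense or toral one-parameter subgroup. Concretely, a general connected solvable group is a quotient of a simply connected one by a central lattice, so some coordinate one-parameter subgroups become compact (toral); these toral directions must be gathered into the maximal compact $K$ while the genuinely non-compact directions are ordered to produce the $H_i$, and this bookkeeping must be arranged compatibly with the semisimple $KAN$ part so that the resulting factors remain subgroups and the total count is $r=\dim G-\dim K$. Once the $H_i$ are in place, uniqueness and continuity require no separate argument: $\Phi$ is a smooth map between manifolds of equal dimension $\dim G$, and if it is shown to be a bijection whose differential is everywhere nonsingular then it is a smooth bijection which is a local diffeomorphism, hence a diffeomorphism, so the factors $h_i$ and $k$ are smooth (in particular continuous) functions of $g$ and the decomposition is unique. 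Verifying bijectivity globally---rather than merely locally---is where the structural inputs above, especially the global diffeomorphism $G/K\cong\R^d$ and the second-kind coordinates, do the essential work.
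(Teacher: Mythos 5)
You should note at the outset that the paper does not prove this statement at all: it is imported as a black box, with a citation to Iwasawa's Theorem~6 and to Malcev (and to Hochschild, Chapter~XV, for the non-connected extension), and is then used as an ingredient in the proof of Theorem~\ref{thm:main}. So there is no in-paper argument to compare against; your sketch has to be measured against the classical structure-theoretic proof, and in broad outline you do reproduce its skeleton (exponential coordinates of the second kind for the solvable part, $G=KAN$ for the semisimple part, induction through the radical to combine them).

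As a proof, however, your proposal has a genuine gap, and it sits exactly where you predict trouble: the ``amalgamation'' is not bookkeeping but is the entire content of the theorem once the structural inputs are granted. Three concrete points. First, for a semisimple group with infinite center the factor $K$ in the Iwasawa decomposition $G=KAN$ is \emph{not} compact and is not the maximal compact subgroup of the theorem: for the universal cover of $\mathrm{SL}_2(\R)$ the maximal compact subgroup is trivial while the Iwasawa factor is isomorphic to $\R$, so your statement that in the semisimple case ``the classical Iwasawa decomposition is exactly what is wanted'' is false as written --- the Iwasawa $K$ must itself be split into a compact part and further $\R$-factors. Second, Levi subgroups of a connected Lie group need not be closed (they can wind densely when the center of the Levi factor gets identified with an irrational line in a torus direction of the radical), so one cannot ``split off'' the semisimple part inside $G$; one must instead work with the quotient $\pi\colon G\to G/R$ by the radical, and then the issue is lifting: a one-parameter subgroup of $G/R$ lifts only after a choice, the lift need not be closed or isomorphic to $\R$, and products of lifts need not fill out $G$ bijectively. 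The classical proofs resolve this by an induction in which one decomposes the closed subgroup $\pi^{-1}(K')$, $K'$ a maximal compact subgroup of $G/R$, and this step appears nowhere in your plan. Third, your reduction of uniqueness and continuity to ``$\Phi$ is a bijective local diffeomorphism'' is fine, but global bijectivity of $\Phi$ in the mixed case is precisely what the unexecuted amalgamation would have to deliver. In short: you have correctly identified the standard skeleton and correctly located the hard step, but the step you defer is the theorem itself, not a detail of it.
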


The following linearization theorem is due to Bochner. Since we are not
aware of a reference for the formal case, we present the proof below.

\begin{Thm}[\protect{\cite[Theorem~1]{bochner}}]\label{thm:bochner}
  Let $K$ be a compact Lie group and $\rho:K\to\Diff[[\C^n,0]]$ a homomorphism
  of Lie groups. Then after a formal change of coordinates, $\rho$ is equivalent
  to its linear representation (mapping $g$ to the linear part $\d\rho(g)$).
\end{Thm}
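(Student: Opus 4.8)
The plan is to carry over Bochner's averaging argument to the formal category, the only genuinely new point being the interpretation of the averaging integral over $K$ at the level of jets. Write $L(g):=\d\rho(g)$ for the linear part of $\rho(g)$. Taking the linear part (the induced action on $\fm/\fm^2$) is a group homomorphism $\Diff[[\C^n,0]]\to\GL_n(\C)$, so $L=\d\rho\colon K\to\GL_n(\C)$ is an honest finite-dimensional representation of $K$, and $\GL_n(\C)$ embeds into $\Diff[[\C^n,0]]$ as the linear automorphisms. Let $\d g$ be the normalized Haar measure on the compact group $K$ and set
\begin{equation}
  \Phi:=\int_K \rho(g)\circ L(g)^{-1}\d g .
\end{equation}
I claim that $\Phi$ is a well-defined formal diffeomorphism satisfying $\rho(h)\circ\Phi=\Phi\circ L(h)$ for all $h\in K$; conjugating by $\Phi$ then gives $\Phi^{-1}\circ\rho(h)\circ\Phi=L(h)=\d\rho(h)$, which is exactly the asserted equivalence.

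To make sense of $\Phi$ I would work in the substitution picture, regarding a formal diffeomorphism as the $n$-tuple of power series giving the images of the coordinates, an element of the linear space $\fm^{\oplus n}$. Fix $p$: the $p$-jet of $\rho(g)\circ L(g)^{-1}$ is obtained from $\rho_p(g)$ and $L(g)$ by jet-composition and matrix inversion, which are polynomial operations, and since $\rho$ is a Lie group homomorphism it is continuous, so every Taylor coefficient of $\rho(g)\circ L(g)^{-1}$ is a continuous function of $g\in K$. As $K$ is compact these are integrable, and integrating coefficientwise yields a tuple whose $p$-jets are compatible with the truncations $\C_{p+1}[[x]]\to\C_p[[x]]$; hence $\Phi$ is a well-defined element of the inverse limit $\Diff[[\C^n,0]]$, provided its linear part is invertible. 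But the linear part is a linear functional of the jet, so it commutes with integration, and the linear part of the integrand $\rho(g)\circ L(g)^{-1}$ is $L(g)\circ L(g)^{-1}=\id$; integrating against a probability measure gives that the linear part of $\Phi$ is $\id$. In particular $\Phi$ is invertible and tangent to the identity.

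It remains to check the intertwining relation. Using that $\rho$ is a homomorphism,
\begin{equation}
  \rho(h)\circ\Phi=\int_K \rho(h)\circ\rho(g)\circ L(g)^{-1}\d g=\int_K \rho(hg)\circ L(g)^{-1}\d g .
\end{equation}
Substituting $g'=hg$ and using the left-invariance of Haar measure together with $L(h^{-1}g')^{-1}=L(g')^{-1}\circ L(h)$ gives
\begin{equation}
  \rho(h)\circ\Phi=\left(\int_K \rho(g')\circ L(g')^{-1}\d g'\right)\circ L(h)=\Phi\circ L(h),
\end{equation}
as required. Since $\Phi$ is invertible this yields $\Phi^{-1}\circ\rho(h)\circ\Phi=L(h)$ for every $h\in K$.

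The displayed computations are the classical ones and are routine. The step I expect to require the most care---and the only place the formal setting really intervenes---is the construction of $\Phi$ as a genuine element of $\Diff[[\C^n,0]]$: one must combine the inverse-limit structure of $\Diff[[\C^n,0]]$, the continuity of $\rho$ supplied by the Lie-group hypothesis, and the polynomiality of jet composition and inversion to see that coefficientwise Haar averaging produces a legitimate formal power series tuple, and then identify its linear part to guarantee invertibility. Once $\Phi\in\Diff[[\C^n,0]]$ is in hand, the intertwining is a formal change-of-variables argument that uses only that $\rho$ and $L$ are homomorphisms and that $K$ is compact, so that Haar measure exists and is invariant.
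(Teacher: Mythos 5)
There is a genuine gap in your intertwining computation, and it is not cosmetic: the map $\Phi$ you construct does \emph{not} conjugate $\rho$ to its linear part. The faulty step is the very first equality
\begin{equation}
  \rho(h)\circ\Phi=\int_K \rho(h)\circ\rho(g)\circ L(g)^{-1}\d g .
\end{equation}
Composition of formal maps $F\circ G$ is $\C$-linear in the \emph{outer} factor $F$, but it is not linear in the inner factor $G$ unless $F$ itself is linear. Your $\Phi$ is a coefficientwise average of formal maps, and here you are distributing the nonlinear map $\rho(h)$ over that average; that is exactly the operation that fails. A concrete counterexample: let $K=\Z/2\Z$ act on $(\C,0)$ by the involution $\rho(h)(x)=-x/(1-x)$, so $L(h)=-1$. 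Your recipe gives
\begin{equation}
  \Phi(x)=\tfrac12\bigl(x+x/(1+x)\bigr)=\frac{x(x+2)}{2(1+x)},
\end{equation}
and a direct expansion shows $\Phi\circ L(h)=-x-\tfrac12x^2-\tfrac12x^3-\tfrac12x^4-\cdots$ while $\rho(h)\circ\Phi=-x-\tfrac12x^2-\tfrac12x^3-\tfrac14x^4-\cdots$, so the two sides already disagree in degree $4$. Note that the remaining steps of your computation (the substitution $g'=hg$ and pulling $L(h)$ out on the inner side) are valid, since they use only left invariance and linearity of composition in the outer factor; so what you have actually proved is the true but useless identity $\int_K\rho(hg)\circ L(g)^{-1}\d g=\Phi\circ L(h)$.

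The repair is to put the linearization on the other side of the integrand, which is precisely what the paper does: define
\begin{equation}
  U=\int_K (\d\rho(g))^{-1}\circ\rho(g)\d\mu(g).
\end{equation}
As in your argument, $\d U=\id$, so $U\in\Diff[[\C^n,0]]$; and now
\begin{equation}
  U\circ\rho(h)=\int_K (\d\rho(g))^{-1}\circ\rho(gh)\d\mu(g)
  =\int_K (\d\rho(gh^{-1}))^{-1}\circ\rho(g)\d\mu(g)=\d\rho(h)\circ U,
\end{equation}
where the first equality composes with $\rho(h)$ on the \emph{inner} side (linearity in the outer factor, valid), the second is the substitution $g\mapsto gh^{-1}$ (Haar measure on a compact group is bi-invariant), and in the last step the only map pulled out of the integral on the outer side is the \emph{linear} operator $\d\rho(h)$, for which distributing over an average is legitimate. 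Everything else in your write-up --- continuity of the coefficients via the jet projections, coefficientwise integration and its compatibility with truncation, and the identification of the linear part of the average --- is correct and coincides with the paper's justification that the averaged map is a well-defined element of $\Diff[[\C^n,0]]$; only the side on which you linearize must be switched.
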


\begin{proof}
  Let $\mu$ denote the normalized left-invariant Haar measure on $K$.
  We define a formal map $U\in\C[[x]]^n$ by averaging,
  \begin{equation}
    U = \int_K (\d\rho(g))^{-1}\rho(g)\d\mu(g). 
  \end{equation}
  Taking linear parts we see that
  \begin{equation}
    \d U = \int_K (\d\rho(g))^{-1}\d\rho(g) = \id,
  \end{equation}
  so $U\in\Diff[[\C^n,0]]$. Moreover, by invariance of the Haar measure,
  \begin{equation}
    \begin{split}
      U\circ\rho(h) &= \int_K (\d\rho(g))^{-1}\rho(gh)\d\mu(g) \\
      &= \int_K (\d\rho(gh^{-1}))^{-1}\rho(g)\d\mu(g) = \d\rho(h) U
    \end{split}
  \end{equation}
  where in the middle equality we translate $g\to gh^{-1}$, using the
  invariance of the Haar measure. We thus see that $U$ conjugates
  $\rho(h)$ to $\d\rho(h)$ as claimed.
\end{proof}

We require one more lemma. While simple, this lemma is in fact the
heart of the proof. The lemma is due to Arnold \cite[Lemma 2]{Arnold}
(see also \cite[Lemma 5]{SeigalYakov}).

\begin{Lem}\label{lem:exponent-coefficients}
  Let $\rho:\R\to\Diff[[\C^n,0]]$ be given by $\rho(t)=e^{tV}$ where
  $V\in\Der[[\C^n,0]]$. Denote by $\l_1,\ldots,\l_n$ the spectrum
  of the linear part of $V$. Denote by $t$ the coordinate
  on $\R$.
  
  Then the matrix coefficients of $\rho$
  are contained in the ring $\C[e^{\l_1t},\ldots,e^{\l_nt},t]$.
\end{Lem}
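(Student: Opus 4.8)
The plan is to analyze how the linear part of $V$ governs the structure of the matrix coefficients, and to show that exponentiation produces only polynomial-times-exponential functions in $t$. The key observation is that the matrix coefficients $\rho_{\alpha,\beta}(t)$ record the action of $e^{tV}$ on the monomial $x^\alpha$, read off in the monomial basis. Since $V$ maps $\fm$ to itself and acts on the finite-dimensional jet space $\C_p[[x]]$, it induces for each $p$ a finite-dimensional linear operator $V_p$ on $\C_p[[x]]$, and $\rho_p(t) = e^{tV_p}$. Thus every matrix coefficient of $\rho_p$ is an entry of the matrix exponential $e^{tV_p}$ with respect to the monomial basis, and it suffices to show that each such entry lies in $\C[e^{\l_1 t},\dots,e^{\l_n t}, t]$.

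First I would compute the spectrum of the induced operator $V_p$ in terms of the spectrum $\l_1,\dots,\l_n$ of the linear part of $V$. The natural grading of $\C[[x]]$ by monomial degree is preserved up to raising degree: $V$ sends a degree-$d$ monomial to a combination of monomials of degree $\ge d$, since $V$ maps $\fm^d$ into $\fm^d$. Hence in the monomial basis ordered by increasing degree, $V_p$ is block upper-triangular, and its diagonal blocks are exactly the action of the linear part $\d V$ on each homogeneous piece. Passing to a basis that puts $\d V$ in Jordan form, the eigenvalue of $\d V$ acting on a degree-$d$ monomial $x^\gamma$ is $(\gamma, \l) = \gamma_1\l_1+\cdots+\gamma_n\l_n$. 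Therefore the eigenvalues of $V_p$ are precisely the integer combinations $(\gamma,\l)$ with $|\gamma|\le p$.

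Next I would invoke the standard structure of the matrix exponential: for any finite-dimensional operator $A$ with eigenvalues $\mu_1,\dots,\mu_m$, the entries of $e^{tA}$ are $\C$-linear combinations of terms $t^j e^{\mu_i t}$, where the powers $j$ are bounded by the sizes of the Jordan blocks. Applying this to $A = V_p$, every matrix coefficient of $\rho_p$ is a combination of terms $t^j e^{(\gamma,\l) t}$ with $|\gamma|\le p$. Since $e^{(\gamma,\l)t} = \prod_i (e^{\l_i t})^{\gamma_i}$ is a monomial in $e^{\l_1 t},\dots,e^{\l_n t}$, each such term lies in $\C[e^{\l_1 t},\dots,e^{\l_n t}, t]$. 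Because the space of matrix coefficients of $\rho$ is the union over $p$ of the spaces of matrix coefficients of $\rho_p$, the containment holds for all of them.

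The main obstacle, and the step requiring the most care, is justifying the block-triangular structure of $V_p$ and correctly identifying its eigenvalues as the weights $(\gamma,\l)$. One must verify that the off-diagonal (strictly degree-raising) part of $V_p$ is nilpotent, so that it does not contribute new eigenvalues but only affects the Jordan block sizes, hence only the permissible powers of $t$. Concretely, the degree-raising part shifts monomial degree strictly upward and so is nilpotent on the finite-dimensional space $\C_p[[x]]$; thus the eigenvalues of $V_p$ coincide with those of its diagonal (homogeneous) part, which are the $(\gamma,\l)$. Once this is established, the matrix-exponential structure theorem closes the argument immediately, and no growth control in $t$ beyond polynomial-times-exponential is needed.
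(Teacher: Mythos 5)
Your proof is correct and follows essentially the same route as the paper's: reduce to the jet spaces $\C_p[[x]]$, show that the spectrum of the induced operator $j^pV$ consists of the weights $\sum_i\gamma_i\l_i$ with $\gamma\in\N^n$ via triangularization in the monomial basis, and invoke the classical structure of the entries of a matrix exponential. The paper accomplishes the triangularization in a single step (linear part made lower-triangular, then the degree-lexicographic ordering makes all of $j^pV$ lower-triangular), which is just a more compact version of your two-step argument of block-triangularity by degree followed by Jordan form within each homogeneous block.
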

\begin{proof}
  The matrix coefficients of $\rho$ are the union of the matrix
  coefficients of the jets $j^p\rho$ for all $p\in\N$. It is a
  classical result, following easily from the Jordan decomposition,
  that the matrix coefficients of the matrix exponential $e^{tL}$ of a
  finite dimensional operator $L$ are contained in the ring
  $\C[e^{\mu_it},t]$ where $\mu_i$ are the elements of the spectrum of
  $L$. Thus the claim will be proved if we show that the spectrum of
  the jet $j^p\rho$ is contained in the semigroup generated by
  $\l_1,\ldots\l_n$ for every $p$.

  We may, after a linear change of coordinates, assume that the linear
  part of $V$ is a lower-triangular matrix. Put the
  degree-lexicographic ordering on all monomials in $\C_p[[x]]$. Then
  the matrix representing $j^p V$ is again lower-triangular, and the
  diagonal entry corresponding to the monomial $x^\alpha$ is precisely
  $\sum \alpha_i \lambda_i$. Thus the spectrum is contained in the
  semigroup generated by the $\lambda_i$, as claimed.
\end{proof}

We are now ready to present the proof of Theorem~\ref{thm:main}.
\begin{proof}[Proof of Theorem~\ref{thm:main}]
  Recall that $K$ denotes a maximal compact subgroup of $G$. Assume
  first that $G$ is connected. By Theorem~\ref{thm:iwasawa} we have
  for every $g\in G$ a decomposition
  \begin{equation} \label{eq:g-decomp}
    g=h_1\cdots h_r k \qquad h_i\in H_i \qquad k\in K 
  \end{equation}
  where we may think of $h_i,k$ as continuous functions of $g$.

  By Theorem~\ref{thm:bochner}, making a formal change of coordinates
  we may assume that the action of $K$ is linear (recall that a formal
  change of coordinates does not change the space of matrix
  coefficients). Denote by $\~K\subset\GL(n,\C)$ the image of $K$
  under $\rho$. Then $\~K$ is a real algebraic subgroup by a theorem
  of Chevalley, and clearly $\dim\~K\le\dim K$. Let $\~K_\C$ denote
  the complexification of $\~K$, which is a complex-algebraic group of
  complex dimension equal to $\dim\~K$. Since the action of $K$ is
  linear, the space of matrix coefficients of $\rho\rest K$ is contained in the
  finitely-generated $\C$-algebra of regular functions $R[\~K_\C]$ on
  the algebraic group $\~K_\C$.

  For $i=1,\ldots,r$ the subgroup $H_i$ is isomorphic to $\R$, and its
  action $\rho_i:H_i\to\Diff[[\C^n,0]]$ is therefore given by
  $\rho_i(t)=e^{tV_i}$ for some infinitesimal generator
  $V_i\in\Der[[\C^n,0]]$ where $t\in\R\simeq H_i$. By
  Lemma~\ref{lem:exponent-coefficients} the matrix coefficients of
  $\rho_i$ belong to a ring $R_i$, which is a finitely-generated
  $\C$-algebra of dimension at most $n+1$.

  Now,
  \begin{equation}
    \rho(g) = \rho(h_1)\cdots\rho(h_r)\rho(k)
  \end{equation}
  so the matrix coefficients of $\rho(g)$ are multilinear combinations
  of the matrix coefficients of $\rho(h_i),\rho(k)$. In particular they belong
  to the ring
  \begin{equation}
    R_\rho = R_1\otimes_\C \cdots \otimes_\C R_r \otimes_\C R[\~K_\C]
  \end{equation}
  which, by the above, is a Noetherian $\C$-algebra (in fact a
  finitely generated $\C$-algebra) of dimension bounded by $\dim G+nr$
  as claimed.

  For the non-connected case, one can carry out the proof above in
  the same manner, noting that Theorem~\ref{thm:iwasawa} remains
  true for Lie groups with finitely many connected components (see
  for instance \cite[Chapter~XV]{hochschild}).
\end{proof}

\subsection{Proof of Theorem~\ref{thm:comm-gp-embedding} and further results}
\label{sec:embedding-proof}

In this subsection we prove Theorem~\ref{thm:comm-gp-embedding} and
give some more detailed results about embeddings of formal
diffeomorphisms in formal flows.

We begin with a slightly more detailed analog of
Lemma~\ref{lem:exponent-coefficients} for the discrete case. 

\begin{Lem}\label{lem:power-coefficients}
  Let $F\in\Diff[[\C^n,0]]$ and denote by $\l_1,\ldots,\l_n$ the
  spectrum of the linear part of $F$. Let $\tau:\Z\to\Diff[[\C^n,0]]$
  be given by $\tau(1)=F$. Denote by $t$ the coordinate on $\Z$.

  Then:
  \begin{enumerate}
  \item If $j^p F$ is a semisimple operator for every $p\in\N$ then
    the space of matrix coefficients of $\tau$ is equal to
    $\C[\l_1^t,\ldots,\l_n^t]$.
  \item Otherwise, the space of matrix coefficients of $\tau$ is
    contained in $\C[\l_1^t,\ldots,\l_n^t,t]$ and contains a function
    of the form
    $\l^{\alpha t}t=\l_1^{\alpha_1t}\cdots\l_n^{\alpha_n t} t$ for
    some $\alpha\in\N^n$.
  \end{enumerate}
\end{Lem}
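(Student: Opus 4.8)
The plan is to mirror the proof of Lemma~\ref{lem:exponent-coefficients}, replacing the matrix exponential $e^{tL}$ by the integer power $L^t$ of the finite-dimensional operator $L=j^pF$ acting on $\C_p[[x]]$, and then to sharpen the bookkeeping to obtain the two precise statements. As in the continuous case, the space of matrix coefficients of $\tau$ is the union over $p\in\N$ of the spaces of matrix coefficients of the representations $j^p\tau\colon t\mapsto (j^pF)^t$. The first input is the discrete analog of the classical fact used in Lemma~\ref{lem:exponent-coefficients}: writing the Jordan decomposition $L=S+N$ with $S$ semisimple invertible, $N$ nilpotent and $SN=NS$, one has $L^t=S^t(I+S^{-1}N)^t=S^t\sum_{k}\binom{t}{k}(S^{-1}N)^k$, a \emph{finite} sum, so the entries of $L^t$ lie in $\C[\mu_i^t,t]$ with $\mu_i$ ranging over $\spec L$, and they lie in $\C[\mu_i^t]$ when $N=0$. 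The second input is identical to the continuous case: after a linear change of coordinates making the linear part of $F$ triangular, $j^pF$ is block-triangular for the degree-graded monomial basis, the diagonal blocks being the symmetric powers of the linear part, whence $\spec(j^pF)=\{\l^\beta=\prod_i\l_i^{\beta_i}:|\beta|\le p\}$ lies in the multiplicative semigroup generated by $\l_1,\ldots,\l_n$. Since $F$ is a diffeomorphism each $\l_i\neq0$, so every such eigenvalue is nonzero. Substituting $\mu_i^t=\l^{\beta t}=\prod_i(\l_i^t)^{\beta_i}$ then gives at once the containments $\subseteq\C[\l_1^t,\ldots,\l_n^t]$ in case (1) and $\subseteq\C[\l_1^t,\ldots,\l_n^t,t]$ in case (2).

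For the reverse inclusion in case (1) I would fix $\alpha\in\N^n$, set $p=|\alpha|$, and show that $\l^{\alpha t}$ lies in the span of the matrix coefficients of $j^p\tau$; taking the union over $p$ and using that the monomials $\l^{\alpha t}$ span the vector space $\C[\l_1^t,\ldots,\l_n^t]$ then yields equality. Since $j^pF$ is semisimple, $\l^\alpha$ is a genuine eigenvalue (a diagonal entry of the triangular form) with an honest eigenvector $u$; choosing a monomial $x^\gamma$ with $(u,x^\gamma)\neq0$, which exists by nondegeneracy of $(\cdot,\cdot)$, the function $((j^pF)^t u,x^\gamma)$ equals a nonzero multiple of $\l^{\alpha t}$ and, being a $\C$-linear combination of the $((j^pF)^tx^\beta,x^\gamma)$, lies in the span of matrix coefficients. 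The point where semisimplicity is essential is precisely that $u$ is an honest, not merely generalized, eigenvector, so that $(j^pF)^tu=(\l^\alpha)^tu$ exactly.

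For case (2), suppose $j^pF$ is not semisimple for some $p$, so $L=j^pF$ has a Jordan block of size at least two at some eigenvalue $\nu=\l^\alpha\neq0$. I would take a length-two chain $u,\;u':=(L-\nu I)u$ with $u'\neq0$ and $(L-\nu I)u'=0$; the decomposition above then collapses to $L^t u=\nu^t u+t\,\nu^{t-1}u'$. Pairing with a monomial $x^\gamma$ for which $(u',x^\gamma)\neq0$ gives a function $(L^t u,x^\gamma)=\nu^t(u,x^\gamma)+t\,\nu^{t-1}(u',x^\gamma)$ in the span of matrix coefficients. Separately, $u'$ is an honest eigenvector, so $(L^t u',x^\gamma)=\nu^t(u',x^\gamma)$ puts $\nu^t$ into the span; subtracting the appropriate multiple isolates $t\,\nu^t$, a nonzero multiple of $t\l^{\alpha t}=t\,\l_1^{\alpha_1t}\cdots\l_n^{\alpha_nt}$, as required.

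The step I expect to be the main obstacle is the reverse inclusions, i.e. producing the claimed \emph{pure} functions inside the span rather than merely functions containing the right terms: the spectral decomposition of $j^pF$ is not orthogonal for the symmetric form $(\cdot,\cdot)$, so one cannot read off $\nu^t$ from a single entry, and the argument must instead select eigenvectors (or length-two chains) and exploit nondegeneracy of the pairing to extract exactly one exponential, or one $t$-times-exponential, term at a time. The remaining ingredients — the triangularity of $j^pF$ with its resulting spectrum, and the Jordan-decomposition formula for $L^t$ — are routine transcriptions of the continuous case.
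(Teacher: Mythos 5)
Your proof is correct and takes essentially the same approach as the paper, which omits the detailed argument but prescribes exactly this route: repeat the proof of Lemma~\ref{lem:exponent-coefficients}, replacing the computation of $e^{tL}$ for $L$ in Jordan form by the computation of $L^t$, with the same triangularization argument placing $\spec(j^pF)$ in the multiplicative semigroup generated by $\l_1,\ldots,\l_n$. The ``main obstacle'' you flag (extracting the pure terms $\l^{\alpha t}$ and $t\,\l^{\alpha t}$ despite the pairing not respecting the spectral decomposition) is handled implicitly in the paper by the basis-independence of the space of matrix coefficients --- one may read off entries of $L^t$ directly in a Jordan basis --- which is equivalent to, though slicker than, your device of pairing eigenvectors and length-two chains against monomials.
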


The proof, which we omit, proceeds in the same way as the proof of
Lemma~\ref{lem:exponent-coefficients}. One merely needs to replace the
computation of $e^{tL}$ for a matrix $L$ in Jordan form by the equally
well-known computation of $L^t$ illustrated in the following simple
example.

\begin{Ex}
  Let $A\in\Mat_{3\times3}(\C)$ be given by
  \begin{equation}
    A= \begin{pmatrix}
      \l_1&1& \\
      &\l_1& \\
      &&\l_2
    \end{pmatrix}.
  \end{equation}
  Then for $t\in\Z$,
  \begin{equation}
    A^t = \begin{pmatrix}
      \l_1^t& t\l_1^{t-1}& \\
      &\l_1^t& \\
      &&\l_2^t
    \end{pmatrix}
  \end{equation}
  and we see that the space of matrix coefficients is contained in
  $\C[\l_1^t,\l_2^t,t]$ and contains the function $\l_1^t t$.
\end{Ex}

Denote by $\TDiff[[\C^n,0]]$ the group of formal diffeomorphisms whose
linear part is lower-triangular with respect to the standard
coordinates. We remark that, in the same way as in the proof of
Lemma~\ref{lem:exponent-coefficients}, one can check that such
diffeomorphisms are automatically lower-triangular with respect to the
degree-lexicographic ordering on the standard monomial basis. We
denote by $\TDiff_p[[\C^n,0]]$ the corresponding group of jets. For
natural numbers $q>p$ denote by
$\pi_{q,p}:\TDiff_q[[\C^n,0]]\to\TDiff_p[[\C^n,0]]$ the natural
projection.

For any $q\in\N$ we denote by $R_q^T:=R[\TDiff_q[[\C^n,0]]$ the ring of
coordinates of $\TDiff_q[[\C^n,0]]$. For any pair of multindices
$|\alpha|,|\beta|\leq$ with $\alpha\ge\beta$ (in lexicographic order)
we have a coordinate function
\begin{equation}
  X_{\alpha,\beta}:\TDiff_q[[\C^n,0]]\to\C, \qquad X_{\alpha,\beta}(H):= (Hx^\alpha,x^\beta).
\end{equation}
For $j=1,\ldots,n$ we denote $L_j:=X_{j,j}$ the corresponding
coordinate on the diagonal. Then $R_q^T$ is generated as a ring by
$X_{\alpha,\beta}$ and $L_1^{-1},\ldots,L_n^{-1}$. Indeed, it is a
closed subgroup of the group of all triangular matrices in $\C_q[[x]]$
(with respect to the standard monomial basis) whose coordinate ring is
generated by $X_{\alpha,\beta}$ and the inverses of all diagonal
elements $X_{\alpha,\alpha}$. It remains only to note that it suffices
to localize by $L_{1\ldots n}$ because in $\TDiff_q[[\C^n,0]]$ we have
$X_{\alpha,\alpha}=L^\alpha$.

\begin{Lem} \label{lem:Gp-Gq}
  Let $F\in\TDiff[[\C^n,0]]$. Let $p$ denote the first index such
  that $j^pF$ is non-semisimple, or $1$ if no such index exists.
  For any $k\in\N$ denote by $G^*_k\subset\TDiff_k[[\C^n,0]]$
  the group generated by $j^k(F)$, and by $G_k$ its Zariski closure
  (i.e. the algebraic group generated by $j^k(F)$).

  Then for any integer $q\ge p$ the projection $\pi_{q,p}$ restricts
  to an isomorphism of algebraic groups $G_q\to G_p$.
\end{Lem}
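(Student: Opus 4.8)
The plan is to analyze each $G_k$ through the Jordan decomposition of the single element $j^kF$ and to reduce the assertion to a comparison of toral and unipotent parts separately. Write $j^kF=s_ku_k$ for the Jordan decomposition, with $s_k$ semisimple, $u_k$ unipotent and $s_ku_k=u_ks_k$, and set $T_k:=\overline{\langle s_k\rangle}$ and $U_k:=\overline{\langle u_k\rangle}$. Recalling the standard facts that $s_k,u_k\in G_k$ and that $T_k,U_k$ are commuting closed subgroups meeting only in the identity (an element that is both semisimple and unipotent is $\id$), one obtains a direct product decomposition $G_k=T_k\times U_k$ with $T_k$ diagonalizable and $U_k$ unipotent. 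Since $\pi_{q,p}$ is a morphism of algebraic groups it commutes with the Jordan decomposition, so $\pi_{q,p}(s_q)=s_p$ and $\pi_{q,p}(u_q)=u_p$; consequently $\pi_{q,p}$ carries $T_q$ onto $T_p$ and $U_q$ onto $U_p$. It therefore suffices to show that the two restrictions $\pi_{q,p}\colon T_q\to T_p$ and $\pi_{q,p}\colon U_q\to U_p$ are isomorphisms.

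The unipotent factor is the easy one. By the definition of $p$, together with the fact (again from functoriality of the Jordan decomposition) that the unipotent part of a projection is the projection of the unipotent part, $j^kF$ is non-semisimple for every $k\ge p$; hence $u_q\neq\id$ and $u_p\neq\id$, and each of $U_q,U_p$ is the one-parameter unipotent group $\{\exp(t\log u_k):t\in\C\}\cong\mathbb{G}_a$. In the resulting additive coordinate $\pi_{q,p}$ sends $\exp(t\log u_q)$ to $\exp(t\log u_p)$, i.e. it is the identity, hence an isomorphism. (If $F$ is semisimple at every level then $p=1$, both unipotent factors are trivial, and this step is vacuous.)

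For the toral factor I would argue by character-group duality for diagonalizable groups. The eigenvalues of $s_k$ are exactly the diagonal entries of the triangular matrix $j^kF$, namely the monomials $\lambda^\alpha=\lambda_1^{\alpha_1}\cdots\lambda_n^{\alpha_n}$ with $1\le|\alpha|\le k$, as in the proof of Lemma~\ref{lem:exponent-coefficients}. Evaluation at the generator gives an isomorphism of character groups $X(T_k)\xrightarrow{\sim}M$, $\chi\mapsto\chi(s_k)$, where $M\subset\C^*$ is the multiplicative subgroup generated by these eigenvalues. The crucial observation is that $M$ does not depend on $k$ for $k\ge1$: every $\lambda^\alpha$ is a product of the degree-one eigenvalues $\lambda_1,\dots,\lambda_n$, which already occur at level $1$, so $M=\langle\lambda_1,\dots,\lambda_n\rangle$ for all $k\ge1$. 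Since $\pi_{q,p}(s_q)=s_p$, the pullback $\pi_{q,p}^*\colon X(T_p)\to X(T_q)$ becomes, under these two identifications, the identity map of $M$; being an isomorphism of character groups it dualizes to an isomorphism $T_q\to T_p$.

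Combining the two factor isomorphisms yields the desired isomorphism $\pi_{q,p}\colon G_q=T_q\times U_q\to T_p\times U_p=G_p$. The main obstacle, and the real content of the lemma, is the stabilization phenomenon isolated above: the toral part of $G_k$ is governed entirely by the multiplicative relations among $\lambda_1,\dots,\lambda_n$, and the unipotent part is at most one-dimensional and canonically determined once it first appears, so neither grows past level $p$. The remaining ingredients — that morphisms of algebraic groups respect the Jordan decomposition, that the semisimple and unipotent parts of $j^kF$ lie in $G_k$, and the character-group dictionary for diagonalizable groups — are all standard facts about linear algebraic groups over $\C$.
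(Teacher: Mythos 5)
Your proof is correct, but it follows a genuinely different route from the paper's. The paper argues at the level of coordinate rings: since $\pi_{q,p}(G^*_q)=G^*_p$, the restriction $\pi_{q,p}\colon G_q\to G_p$ is dominant and induces an injection $\iota\colon R_p\to R_q$ of coordinate rings; surjectivity of $\iota$ is then proved by expressing each coordinate function $X_{\alpha,\beta}$ on $G_q$ as a polynomial in the diagonal functions $L_1,\ldots,L_n$ and an auxiliary regular function $S\in R_p$ with $S(j^pF^t)=t$, whose existence is exactly where non-semisimplicity at level $p$ enters, through the matrix coefficient $\l^{\alpha t}t$ of Lemma~\ref{lem:power-coefficients}. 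You instead split $G_k=T_k\times U_k$ by the Jordan decomposition of the generator and treat the two factors separately: the diagonalizable factor by character duality, using that the multiplicative group generated by the eigenvalues $\l^\alpha$, $1\le\abs{\alpha}\le k$, equals $\<\l_1,\ldots,\l_n\>$ already for $k=1$; the unipotent factor by the rigidity of one-parameter unipotent groups, using that $U_p\neq\{\id\}$ by the choice of $p$. Both arguments turn on the same two phenomena --- the eigenvalue group stabilizes at level one, and the ``$t$-direction'' is already present at level $p$ --- but yours replaces the paper's coordinate-ring computation with standard structure theory of commutative linear algebraic groups (functoriality of the Jordan decomposition, the fact that $s_k,u_k$ lie in the Zariski closure of the cyclic group generated by $j^kF$, the anti-equivalence between diagonalizable groups and their finitely generated character groups, and the fact that a nontrivial homomorphism of one-dimensional additive groups in characteristic zero is an isomorphism). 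What your route buys is that it exhibits the structure of $G_k$ explicitly as $T$ or $T\times\C_{\text{a}}$, which the paper only records afterwards in Remark~\ref{rem:G-structure}; what the paper's route buys is self-containedness, staying within the matrix-coefficient formalism already developed for Lemma~\ref{lem:power-coefficients} and avoiding any appeal to deeper facts about algebraic groups.
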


\begin{proof}
  We consider the case that $j^pF$ is non-semisimple. The other case
  is similar but simpler. Denote by $R_p,R_q$ the coordinate rings of
  $G_p,G_q$ respectively. Denote by $\tau_p:\Z\to G_p$ the group
  homomorphism satisfying $\tau_p(1)=F$, and similarly for $\tau_q$.

  Since $\pi_{q,p}G^*_q=G^*_p$ by definition, $\pi_{q,p}$ restricts to
  a homomorphism of algebraic groups $G_q\to G_p$. Moreover since
  $G^*_p$ is dense in $G_p$ this map is dominant. Thus we have an
  induced injection $\iota:R_p\to R_q$, and we claim that it is also a
  surjection. Henceforth we identify $R_p$ with its image in $R_q$.
  The image of $\iota$ contains $X_{\alpha,\beta}$ with
  $|\alpha|,|\beta|\le p$ and $L_1^{-1},\ldots,L_n^{-1}$ by
  definition. We will show that it also contains $X_{\alpha,\beta}$
  for $|\alpha|,|\beta|\le q$ thus concluding the proof. Let
  $\alpha,\beta$ be such a pair.

  Denote by $\l_1,\ldots,\l_n$ the diagonal elements of the linear
  part of $F$. We clearly have
  \begin{equation}
    L_i(j^p F^t) = \l_i^t, \qquad \forall t\in\Z.
  \end{equation}
  We claim that there also exists a regular function $S\in R_p$
  such that
  \begin{equation}
    S(j^p F^t) = t, \qquad \forall t\in\Z.
  \end{equation}
  Indeed, by Lemma~\ref{lem:power-coefficients}, $\l^{\alpha t}t$
  appears in the space of matrix coefficients of $\tau_p$. It is
  therefore given by a linear combination of matrix coefficients. That
  is, there exists a regular function $S^*\in R_p$ such that
  $S^*(j^pF^t)=\l^{\alpha t}t$. We may thus define $S:=S^*/L^\alpha$.

  Recall that by Lemma~\ref{lem:power-coefficients} the space of
  matrix coefficients of $\tau_q$ is contained in
  $\C[\l_1^t,\ldots,\l_n^t,t]$. Thus there exist polynomials
  $P_{\alpha,\beta}$ satisfying
  \begin{equation}
    X_{\alpha,\beta}(j_q F^t) = P_{\alpha,\beta}(\l_1^t,\ldots,\l_n^t,t) = (P_{\alpha,\beta}(L_1,\ldots,L_n,S))(j_q F^t)
  \end{equation}
  In other words, $X_{\alpha,\beta}$ agrees with
  $P_{\alpha,\beta}(L_1,\ldots,L_n,S)$ on $G^*_q$, and hence also on
  $G_q$, i.e. as elements of $R_q$. Thus $X_{\alpha,\beta}$ is indeed
  in the image of $\iota$ as claimed.
\end{proof}

\begin{Rem}
  The conclusion of Lemma~\ref{lem:Gp-Gq} fails if one replaces the
  cyclic group generated by $F$ by a group $G$ generated by two
  arbitrary diffeomorphisms. For instance, in
  Example~\ref{ex:non-solvable} we presented a group $G$ with two
  generators containing diffeomorphisms with fixed points of
  arbitrarily high index. Since a diffeomorphism with a fixed point of
  index $q>1$ corresponds to a non-trivial element in the kernel of
  $\pi_{q,q-1}$ we see that the conclusion of Lemma~\ref{lem:Gp-Gq}
  cannot hold for any $p\in\N$.

  On the other hand, Lemma~\ref{lem:Gp-Gq} may still hold (for an
  appropriate choice of $p\in\N$) under more restrictive assumptions,
  for instance for commutative finitely-generated groups. One may
  attempt to generalize the proof by obtaining an analogous
  description of the rings generated by the matrix coefficients of
  such groups. We do not pursue this further in this paper, and use an
  alternative approach to prove the embedding result for groups with
  several generators.
\end{Rem}

We now state the key proposition on the embedding of a formal
diffeomorphism in an abelian linear algebraic group.

\begin{Prop} \label{prop:diff-embed}
  Let $F\in\Diff[[\C^n,0]]$. Then there exists an abelian linear
  algebraic group $G$ and a homomorphism of Lie groups $\rho:G\to\Diff[[\C^n,0]]$
  such that $F\in\Im\rho$.

  Additionally, if a formal diffeomorphism $F'\in\Diff[[\C^n,0]]$
  commutes with $F$ then it also commutes with any element of $\Im\rho$.
\end{Prop}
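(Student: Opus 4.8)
The plan is to realize $F$ inside the inverse limit of the Zariski closures of the cyclic groups generated by its jets, and to invoke Lemma~\ref{lem:Gp-Gq} to guarantee that this limit is an honest linear algebraic group. I would begin by reducing to the case $F\in\TDiff[[\C^n,0]]$. A linear change of coordinates $L$ brings the linear part of $F$ to lower-triangular form, so that $\tilde F:=L^{-1}\circ F\circ L$ lies in $\TDiff[[\C^n,0]]$ (using the remark that triangularity of the linear part propagates to all jets in the degree-lexicographic order). Since $L$ is itself a formal diffeomorphism, conjugating any homomorphism $\tilde\rho$ by $L$ yields $\rho(g)=L\circ\tilde\rho(g)\circ L^{-1}$ with $\Im\rho=L\,(\Im\tilde\rho)\,L^{-1}$, and both the embedding and the commutation assertions for $F$ follow from the corresponding assertions for $\tilde F$ (note that $F'$ commutes with $F$ if and only if $L^{-1}F'L$ commutes with $\tilde F$). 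Hence I may assume $F\in\TDiff[[\C^n,0]]$.

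Following the notation of Lemma~\ref{lem:Gp-Gq}, for each $k\in\N$ let $G_k\subset\TDiff_k[[\C^n,0]]$ be the Zariski closure of the cyclic group $\langle j^kF\rangle$. Each $G_k$ is an abelian linear algebraic group, being the closure of a cyclic (hence abelian) subgroup of the linear algebraic group $\TDiff_k[[\C^n,0]]$. Let $p$ be the first index at which $j^pF$ is non-semisimple, or $p=1$ if none exists. By Lemma~\ref{lem:Gp-Gq} the jet projections $\pi_{q,p}\colon G_q\to G_p$ are isomorphisms of algebraic groups for all $q\ge p$, so the inverse system $\{G_k\}$ stabilizes and $G:=\varprojlim_k G_k$ is isomorphic, via the maps $\pi_{q,p}$, to the abelian linear algebraic group $G_p$. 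The compatible inclusions $G_k\hookrightarrow\TDiff_k[[\C^n,0]]$ assemble into a map $\rho\colon G\to\TDiff[[\C^n,0]]=\varprojlim_k\TDiff_k[[\C^n,0]]$ whose $k$-jet $\rho_k\colon G\cong G_k\hookrightarrow\TDiff_k[[\C^n,0]]$ is a morphism of algebraic groups; thus $\rho$ is a homomorphism of Lie groups. Finally, the compatible family $(j^kF)_k$ defines an element of $G$ (each $j^kF$ lies in $G_k$), and $\rho$ carries it to $F$, so $F\in\Im\rho$.

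For the commutation statement, suppose $F'$ commutes with $F$. Then for every $k$ the jet $j^kF'$ commutes with $j^kF$, hence with the whole cyclic group $\langle j^kF\rangle$. The centralizer of $j^kF'$ inside $\Diff_k[[\C^n,0]]$ is a Zariski-closed subgroup containing $\langle j^kF\rangle$; and since $\TDiff_k[[\C^n,0]]$ is Zariski closed in $\Diff_k[[\C^n,0]]$, the closure $G_k$ of $\langle j^kF\rangle$ coincides with its closure taken in $\Diff_k[[\C^n,0]]$, and so is contained in this centralizer. Consequently $j^kF'$ commutes with every element of $G_k$. Any $g\in\Im\rho$ satisfies $j^kg\in G_k$ for all $k$, so $j^kg$ commutes with $j^kF'$ at every jet level, whence $g$ commutes with $F'$, as required.

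I expect the genuinely substantive input to be Lemma~\ref{lem:Gp-Gq}, which supplies the stabilization of the groups $G_k$; once that is available the remaining argument is essentially formal. The points that will require care are the reduction to the triangular case and the verification that the stabilized inverse limit genuinely inherits the structure of a linear algebraic group together with a jet-compatible homomorphism into $\TDiff[[\C^n,0]]$.
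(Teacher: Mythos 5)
Your proof is correct and takes essentially the same approach as the paper: reduce to $\TDiff[[\C^n,0]]$ by a linear change of coordinates, use Lemma~\ref{lem:Gp-Gq} to stabilize the Zariski closures $G_k$ of the cyclic jet groups, assemble $\rho$ from the jet-compatible maps (the paper sets $G=G_p$ and uses the inverse isomorphisms $\pi_{q,p}^{-1}$ rather than forming $\varprojlim_k G_k$, a purely cosmetic difference), and derive the commutation statement from the Zariski-closedness of centralizers. The extra details you supply (transfer of both claims under conjugation by $L$, closedness of $\TDiff_k$ in $\Diff_k$) are correct and only make explicit what the paper leaves implicit.
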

\begin{proof}
  After a linear change of coordinates we may assume that $F$ belongs
  to $\TDiff[[\C^n,0]]$. We may thus apply Lemma~\ref{lem:Gp-Gq}. In
  the notations of the lemma, for any $q\ge p$ we denote
  $\rho_q:G_p\to G_q$ the isomorphism of algebraic groups inverse to
  $\pi_{q,p}$. If $q'\ge q\ge p$ then
  $\pi_{q',q}\circ\rho_{q'}=\rho_q$. Thus $\rho_q:q\ge p$ form an
  inverse system with a limit $\rho:G_p\to \TDiff[[\C^n,0]]$, which is
  a homomorphism of Lie groups by definition. Moreover, since
  $\rho_q(j^p(F))=j^q(F)$ for any $q\ge p$, we have $\rho(j^p(F))=F$.
  The claim thus follows with $G=G_p$.

  Finally, suppose $F'\in\Diff[[\C^n,0]]$ commutes with $F$. Then for
  any $q\ge p$ the jet $j^q F'$ commutes with $j^q F$, hence with
  $G^*_q$, and hence also with $G_q$. Since
  $G_q=\Im \rho_q=j^q(\Im\rho)$ we see that $F'$ commutes with
  $\Im\rho$ at the level of $q$-jets for any $q\ge p$, so it in fact
  commutes with $\Im\rho$ as claimed.
\end{proof}



\begin{Rem} \label{rem:G-structure} From the proof above it is easy to
  find the structure of the group $G$. Denote by $\C_a,\C_m$ the
  additive and multiplicative groups of $\C$ respectively and by
  $D\subset\GL(n,\C)$ the group of diagonal matrices. Let $F_s$ denote
  the semisimple part of the linear part of $F$. After a linear change
  of coordinates we may assume $F_s\in D$.

  With these notations $G$ is isomorphic to either $T$ or
  $T\times\C_{\text{a}}$ (for the semisimple and non-semisimple cases
  in Lemma~\ref{lem:Gp-Gq}, respectively) where $T\subset D$ is the
  algebraic group generated by $F_s$. Denote by $T_0$ the connected
  component of the identity in $T$. The number of connected components
  of $G$ is given by the size of the cyclic group $T/T_0$, which can
  be expressed in terms of the spectrum $\l_1,\ldots,\l_n$ of $F_s$ as
  follows. Let $\Phi$ denote the group of multiplicative characters
  $\chi:D\to\C_m$ such that $\chi(F_s)$ is a root of unity. Then $T_0$
  is the common kernel of the characters in $\Phi$, and consequently
  \begin{equation}\label{eq:T-conn-comp}
    \#(T/T_0) = \#\{\chi(F_s):\chi\in\Phi\}.
  \end{equation}
  Finally, the right hand side of~\eqref{eq:T-conn-comp} is the size
  of the group of roots of unity inside the multiplicative group
  generated by $\l_1,\ldots,\l_n$.
\end{Rem}

Theorem~\ref{thm:comm-gp-embedding} is now an easy corollary.

\begin{proof}[Proof of Theorem~\ref{thm:comm-gp-embedding}]
  We prove the claim by induction on $p$. Let
  $\rho:\Z^p\times\R^q\to\Diff[[\C^n,0]]$ be a homomorphism of Lie
  groups. By the inductive hypothesis, there exists an abelian linear
  algebraic group $G$ and a homomorphism of Lie groups
  $\rho^*:G\to\Diff[[\C^n,0]]$ such that $\rho(\{0\}\times\Z^{q-1}\times\R^q)\subset\Im\rho^*$, and
  $\rho(1,0,\ldots,0)$ commutes with $\Im\rho^*$.

  By Proposition~\ref{prop:diff-embed} there exists an abelian linear
  algebraic group $G'$ and a homomorphism of Lie groups $\rho':G'\to\Diff[[\C^n,0]]$
  such that $\rho(1,0,\ldots,0)\in\Im\rho'$ and $\Im\rho^*$ commutes with
  $\Im\rho'$. Then the group $G\times G'$ with the homomorphism $\rho^*\times\rho'$
  satisfies the required conditions.
\end{proof}

From Remark~\ref{rem:G-structure} and the proof of
Corollary~\ref{cor-F-embedding} we have the following corollary.
In particular, this extends a result of Takens \cite{Takens} stating
the a formal diffeomorphism with a unipotent linear part
can always be embedded in the flow of a formal vector field.

\begin{Cor}
  Let $F\in\Diff[[\C^n,0]]$ be a formal diffeomorphism. Let
  $\l_1,\ldots,\l_n$ denote the spectrum of the linear part of $F$,
  and let $k$ denote the size of the group of roots of unity in
  the multiplicative group generated by $\l_1,\ldots,\l_n$.
  
  Then $F^k$ can be embedded in a formal flow, i.e. there exists a
  formal vector field $V\in\Der[[\C^n,0]]$ such that $e^V=F^k$.
\end{Cor}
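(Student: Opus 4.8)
The plan is to combine Proposition~\ref{prop:diff-embed} and Remark~\ref{rem:G-structure} with the argument already carried out in the proof of Corollary~\ref{cor-F-embedding}. The only point requiring genuine attention is to verify that the number of connected components of the embedding group coincides with the integer $k$ named in the statement.

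First I would apply Proposition~\ref{prop:diff-embed} to $F$, obtaining an abelian linear algebraic group $G$, a homomorphism of Lie groups $\rho:G\to\Diff[[\C^n,0]]$, and an element $g\in G$ with $\rho(g)=F$. After a preliminary linear change of coordinates (harmless, since the spectrum $\l_1,\ldots,\l_n$ is basis-independent and the embedding of a power of $F$ in a formal flow is preserved under conjugation) I may assume that the semisimple part $F_s$ of the linear part of $F$ is diagonal, so that Remark~\ref{rem:G-structure} applies directly.

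Next I would read off the component structure of $G$ from Remark~\ref{rem:G-structure}. There $G$ is identified with either $T$ or $T\times\C_a$, where $T$ is the algebraic subgroup of the group of diagonal matrices generated by $F_s$ and $\C_a$ is the (connected) additive group. Hence the number $\abs{G/G_0}$ of connected components of $G$ equals $\#(T/T_0)$, and by~\eqref{eq:T-conn-comp} this equals the size of the group of roots of unity inside the multiplicative group generated by $\l_1,\ldots,\l_n$; that is, $\abs{G/G_0}=k$.

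Finally I would repeat the concluding argument of Corollary~\ref{cor-F-embedding}. Writing $G_0$ for the identity component, the equality $k=\abs{G/G_0}$ gives $g^k\in G_0$. Since $G_0$ is a connected abelian Lie group its exponential map is surjective, so there is an analytic one-parameter subgroup $\eta^*:\R\to G_0$ with $\eta^*(1)=g^k$. Composing with $\rho$ yields a one-parameter group $\eta=\rho\circ\eta^*:\R\to\Diff[[\C^n,0]]$ satisfying $\eta(1)=\rho(g^k)=F^k$, and the infinitesimal generator $V\in\Der[[\C^n,0]]$ of $\eta$ then satisfies $e^V=F^k$. No genuine obstacle arises: the entire content of the corollary beyond Corollary~\ref{cor-F-embedding} is the identification $\abs{G/G_0}=k$, which is precisely~\eqref{eq:T-conn-comp}.
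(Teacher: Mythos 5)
Your proof is correct and follows exactly the route the paper intends: it combines Proposition~\ref{prop:diff-embed} with the component count $\abs{G/G_0}=\#(T/T_0)=k$ from Remark~\ref{rem:G-structure} (via~\eqref{eq:T-conn-comp}), and then repeats the exponential-map argument from the proof of Corollary~\ref{cor-F-embedding}. The paper leaves this assembly implicit, and you have filled it in faithfully, including the harmless conjugation-invariance point needed to put $F_s$ in diagonal form.
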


\bibliographystyle{plain}
\bibliography{refs}

\end{document}